\newtheorem{theorem}{Theorem}[section]
\newtheorem{lemma}[theorem]{Lemma}
\newtheorem{problem}[theorem]{Open problem}
\theoremstyle{remark}
\numberwithin{equation}{section}
\begin{document}

\begin{frontmatter}

%% Title, authors and addresses

%% use the tnoteref command within \title for footnotes;
%% use the tnotetext command for theassociated footnote;
%% use the fnref command within \author or \address for footnotes;
%% use the fntext command for theassociated footnote;
%% use the corref command within \author for corresponding author footnotes;
%% use the cortext command for theassociated footnote;
%% use the ead command for the email address,
%% and the form \ead[url] for the home page:
%% \title{Title\tnoteref{label1}}
%% \tnotetext[label1]{}
%% \author{Name\corref{cor1}\fnref{label2}}
%% \ead{email address}
%% \ead[url]{home page}
%% \fntext[label2]{}
%% \cortext[cor1]{}
%% \affiliation{organization={},
%%             addressline={},
%%             city={},
%%             postcode={},
%%             state={},
%%             country={}}
%% \fntext[label3]{}

\title{%
Bounded compact and dual compact\\ 
approximation properties of Hardy spaces:\\ 
new results and open problems}

%% use optional labels to link authors explicitly to addresses:
%% \author[label1,label2]{}
%% \affiliation[label1]{organization={},
%%             addressline={},
%%             city={},
%%             postcode={},
%%             state={},
%%             country={}}
%%
%% \affiliation[label2]{organization={},
%%             addressline={},
%%             city={},
%%             postcode={},
%%             state={},
%%             country={}}

\author[a]{Oleksiy Karlovych}
\ead{oyk@fct.unl.pt}
\ead[url]{https://docentes.fct.unl.pt/oyk}

\address[a]{Centro de Matem{\'a}tica e Aplica{\c{c}\~o}es,  
Departamento de Matematica,\\
Faculdade de Ciencias e Tecnologia,
Universidade Nova de Lisboa,\\
Quinta da Torre, Caparica,
2829--516, Portugal}

\author[b]{Eugene Shargorodsky}
\ead{eugene.shargorodsky@kcl.ac.uk}
\ead[url]{https://www.kcl.ac.uk/people/eugene-shargorodsky}

\address[b]{
Department of Mathematics, 
King's College London, \\
Strand, London, 
WC2R 2LS,
United Kingdom
}

%%%----------------------------------------------------------------------------
\begin{abstract}
The aim of the paper is to highlight some open problems concerning 
approximation properties of Hardy spaces. We also present some results on 
the bounded compact and the dual compact approximation properties (shortly, 
BCAP and DCAP) of such spaces, to provide background for the open problems.
Namely, we consider abstract Hardy spaces $H[X(w)]$ built 
upon translation-invariant Banach function spaces $X$ with weights $w$
such that $w\in X$ and $w^{-1}\in X'$, where $X'$ is the associate space 
of $X$. We prove that if $X$ is separable, then $H[X(w)]$ has the BCAP
with the approximation constant $M(H[X(w)])\le 2$. Moreover, if $X$ is 
reflexive, then $H[X(w)]$ has the BCAP and the DCAP with the approximation
constants $M(H[X(w)])\le 2$ and $M^*(H[X(w)])\le 2$, respectively.
In the case of classical weighted Hardy space $H^p(w) = H[L^p(w)]$ 
with $1<p<\infty$, one has a sharper result: $M(H^p(w))\le 2^{|1-2/p|}$ and 
$M^*(H^p(w))\le 2^{|1-2/p|}$.
\end{abstract}

%%Graphical abstract
%\begin{graphicalabstract}
%\includegraphics{grabs}
%\end{graphicalabstract}

%%Research highlights
%\begin{highlights}
%\item Research highlight 1
%\item Research highlight 2
%\end{highlights}

\begin{keyword}
Bounded compact and dual compact approximation properties \sep
translation-invariant Banach function space \sep
weighted Hardy space. 

\MSC[2020] 41A44 \sep 41A65 \sep 46B50 \sep 46E30.
\end{keyword}

\end{frontmatter}

%\linenumbers
%%%----------------------------------------------------------------------------
\section{Introduction}
For a Banach space $E$, let $\mathcal{B}(E)$ and $\mathcal{K}(E)$ 
denote the sets of bounded linear and compact linear operators on $E$, 
respectively. The norm of an operator $A\in\mathcal{B}(E)$ is denoted
by $\|A\|_{\mathcal{B}(E)}$. The essential norm of $A \in \mathcal{B}(E)$ 
is defined as follows:
\[
\|A\|_{\mathcal{B}(E),\mathrm{e}} 
:= 
\inf\{\|A - K\|_{\mathcal{B}(E)}\ : \  K \in \mathcal{K}(E)\}.
\]
For a Banach space $E$ and an operator $A\in\mathcal{B}(E)$, consider
the following measure of noncompactness:
\[
\|A\|_{\mathcal{B}(E),m} 
:= 
\inf_{{\tiny \begin{array}{c}
L \subseteq E \mbox{ closed linear subspace}   \\
\mathrm{dim}  (E/L) < \infty  
\end{array} }} \big\|A|_L\big\|_{\mathcal{B}(L)} ,
\]
where $A|_L$ denotes the restriction of $A$ to $L$. 

It follows from \cite[formula (3.29)]{LS71} that if $A\in\mathcal{B}(E)$, then
%%%
\begin{equation}\label{eq:Lebow-Schechter}
\|A\|_{\mathcal{B}(E),m}
\le 
\|A\|_{\mathcal{B}(E),\mathrm{e}}.
\end{equation}
%%%
Motivated by applications to the Fredholm theory of Toeplitz operators
(see \cite{S21}), we are interested in the smallest constant $C$ in the 
reverse estimate:
%%%
\begin{equation}\label{eq:dream}
\|A\|_{\mathcal{B}(E),\mathrm{e}}
\le 
C\|A\|_{\mathcal{B}(E),m}
\quad\mbox{for all}\quad A\in\mathcal{B}(E).
\end{equation}
Note that such estimate is not true without additional assumptions on $E$
(see \cite{AT87} and also \cite{KS22}).

A Banach space $E$ is said to have the bounded compact approximation property 
(BCAP) if there exists a constant $M \in (0, \infty)$ such that given any 
$\varepsilon > 0$ and any finite set $F \subset E$, there exists an operator
$T \in \mathcal{K}(E)$ such that
%%% 
\begin{equation}\label{eq:BCAP}
\|I - T\|_{\mathcal{B}(E)} \le M, 
\quad  
\|y - Ty\|_E < \varepsilon  
\quad \mbox{for all}\quad 
y \in F .
\end{equation}
%%%
Here $I$ is the identity map from $E$ to itself. The greatest lower bound 
of the constants $M$ for which \eqref{eq:BCAP} holds will be denoted by 
$M(E)$. 

A Banach space $E$ with the dual space $E^*$ is said to have the dual compact
approximation property (DCAP) if there is a constant $M^*\in(0,\infty)$ 
such that given any $\varepsilon>0$ and any finite set $G\subset E^*$
there exists an operator $T\in\mathcal{K}(E)$ such that
%%%
\begin{equation}\label{eq:DCAP}
\|I-T\|_{\mathcal{B}(E)}\le M^*,
\quad
\|z-T^*z\|_{E^*}<\varepsilon
\quad\mbox{for all}\quad
z\in G.
\end{equation}
%%%
The greatest lower bound of the constants $M^*$, for which \eqref{eq:DCAP}
holds, will be denoted by $M^*(E)$.

It is easy to see that if $E$ is reflexive, then $E$ has the DCAP if and
only if its dual space $E^*$ has the BCAP. In this case $M^*(E)=M(E^*)$. 
%%%----------------------------------------------------------------------------
\begin{theorem}\label{th:BCAP-DCAP} 
Let $E$ be a Banach space. 
\begin{enumerate}
\item[{\rm(a)}]
If $E$ has the BCAP, then \eqref{eq:dream} holds with $C=2M(E)$.

\item[{\rm(b)}]
If $E$ has the DCAP, then \eqref{eq:dream} holds with $C=M^*(E)$.
\end{enumerate}
\end{theorem}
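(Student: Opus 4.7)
My plan is to prove both parts by constructing, for each $\delta>0$, an explicit compact operator $K\in\mathcal{K}(E)$ such that $\|A-K\|_{\mathcal{B}(E)}$ is bounded by approximately the claimed multiple of $\|A\|_{\mathcal{B}(E),m}$. In both parts I start by fixing a closed subspace $L\subseteq E$ with $\dim(E/L)<\infty$ and $\|A|_L\|_{\mathcal{B}(L)}\le\|A\|_{\mathcal{B}(E),m}+\delta$, so that $L^\perp\subseteq E^*$ is finite-dimensional. The compact $K$ is built from $A$ and a suitable BCAP (resp.\ DCAP) approximant $T$, and the crucial estimate combines the bound $\|A|_L\|\le m+\delta$ with the smallness of $T-I$ (resp.\ $T^*-I$) on an appropriate finite-dimensional set.

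For part (b), I would work on the dual side, where the Hahn–Banach theorem provides a clean one-step decomposition. Applying DCAP to an Auerbach-type basis of $L^\perp$ with tolerance $\eta>0$ yields $T\in\mathcal{K}(E)$ with $\|I-T\|_{\mathcal{B}(E)}\le M^*(E)+\delta$ and $\|(I-T^*)|_{L^\perp}\|$ arbitrarily small (by linearity on the finite-dimensional $L^\perp$). Taking $K:=AT\in\mathcal{K}(E)$ gives $\|A\|_{\mathcal{B}(E),\mathrm{e}}\le\|A-AT\|=\|(I-T^*)A^*\|_{\mathcal{B}(E^*)}$. For each $\phi\in E^*$, the identity $\|(A^*\phi)|_L\|_{L^*}=\|(A|_L)^*\phi\|_{L^*}\le(m+\delta)\|\phi\|$ together with Hahn–Banach produces $\widetilde{\phi}\in E^*$ with $\widetilde{\phi}|_L=(A^*\phi)|_L$ and $\|\widetilde{\phi}\|_{E^*}\le(m+\delta)\|\phi\|$, so that $\rho_\phi:=A^*\phi-\widetilde{\phi}\in L^\perp$. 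Writing
\[
(I-T^*)A^*\phi=(I-T^*)\widetilde{\phi}+(I-T^*)\rho_\phi,
\]
I bound the first term by $(M^*(E)+\delta)(m+\delta)\|\phi\|$ and the second by $\eta\,\|\rho_\phi\|\le\eta(\|A\|+m+\delta)\|\phi\|$, then let $\eta\to0$ and $\delta\to0$ to obtain $\|A\|_{\mathcal{B}(E),\mathrm{e}}\le M^*(E)\,\|A\|_{\mathcal{B}(E),m}$.

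For part (a), I would mirror the above strategy on the primal side. Choose a finite-dimensional complement $N$ of $L$ and apply BCAP to a finite set containing Auerbach bases of both $N$ and $A(N)$, obtaining $T\in\mathcal{K}(E)$ with $\|I-T\|_{\mathcal{B}(E)}\le M(E)+\delta$ and $\|(I-T)|_{N+A(N)}\|$ arbitrarily small. I would then construct a compact operator of the form $K:=TA+AT-TAT$ (so that $A-K=(I-T)A(I-T)$, expressible as two one-sided corrections), or alternatively $K:=TA+AP_N-TAP_N$, and estimate $\|A-K\|$ by splitting via $E=L\oplus N$. The factor $2$ in the bound $2M(E)\cdot\|A\|_{\mathcal{B}(E),m}$ arises because two complementary one-sided compact approximations are combined by the triangle inequality, each contributing $M(E)(m+\delta)$. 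The main obstacle, and the reason the constant degrades from $M^*(E)$ in~(b) to $2M(E)$ in~(a), is the lack of a primal-side analogue of the Hahn–Banach extension: one cannot split a general $x\in E$ as $x=\widetilde{x}+r_x$ with $r_x\in N$ and $\|\widetilde{x}\|\le(m+\delta)\|x\|$, so the projection norms $\|P_L\|,\|P_N\|$ must be absorbed through a two-step argument that costs the extra factor of $2$.
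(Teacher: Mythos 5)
The paper itself does not prove Theorem~\ref{th:BCAP-DCAP}: part (a) is attributed to Lebow--Schechter \cite[Theorems~3.1 and 3.6]{LS71} and part (b) to \cite[Theorem~2.2]{S21}. Your argument for part (b) is correct and complete, and it is essentially the argument of \cite{S21}: passing to the adjoint, $\|A-AT\|=\|(I-T^*)A^*\|$, splitting $A^*\phi=\widetilde{\phi}+\rho_\phi$ via Hahn--Banach with $\|\widetilde{\phi}\|\le(\|A\|_{\mathcal{B}(E),m}+\delta)\|\phi\|$ and $\rho_\phi\in L^\perp$, and using the DCAP on an Auerbach basis of the finite-dimensional $L^\perp$ all check out, and the quantifiers are handled correctly.

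Part (a), however, has a genuine gap. The construction $K=TA+AT-TAT$ with $A-K=(I-T)A(I-T)$, estimated by splitting $x=x_L+x_N$ along $E=L\oplus N$, does not deliver the constant $2M(E)$: the decomposition inflates norms by $\|P_L\|$ and $\|P_N\|$, and these projection norms are \emph{not} uniformly bounded (the codimension of $L$, hence the best available projection constant, blows up as $\delta\to0$). Your closing sentence concedes that these norms ``must be absorbed through a two-step argument that costs the extra factor of $2$,'' but no such absorption is possible by this route, and the stated mechanism for the factor $2$ (two one-sided corrections added by the triangle inequality) is not where it actually comes from. Moreover, the middle term $(I-T)ATx_L$ cannot be made small without asking $T$ to approximate the identity on a compact set defined in terms of $T$ itself. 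The correct argument, following \cite{LS71}, uses the single one-sided correction $K=TA$ and never decomposes $x$: one first shows that the Hausdorff measure of noncompactness $q(A)$ of $A(B_E)$ satisfies $q(A)\le 2\|A\|_{\mathcal{B}(E),m}$. Indeed, choosing $x_1,\dots,x_k\in B_E$ so that their images form an $\epsilon$-net of the unit ball of the finite-dimensional quotient $E/L$, every $x\in B_E$ satisfies $x-x_i=u+v$ with $u\in L$, $\|u\|\le\|x\|+\|x_i\|+\epsilon\le 2+\epsilon$ and $\|v\|\le\epsilon$; hence $\|Ax-Ax_i\|\le(2+\epsilon)(\|A\|_{\mathcal{B}(E),m}+\delta)+\epsilon\|A\|$, and the factor $2$ is the diameter of the unit ball, with no projection norms appearing. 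Then the BCAP applied to the finite set $\{Ax_1,\dots,Ax_k\}$ gives a compact $T$ with $\|(I-T)Ax\|\le(M(E)+\delta)\bigl(2\|A\|_{\mathcal{B}(E),m}+O(\epsilon)+O(\delta)\bigr)$ for all $x\in B_E$, whence $\|A\|_{\mathcal{B}(E),\mathrm{e}}\le\|A-TA\|\le 2M(E)\|A\|_{\mathcal{B}(E),m}$ after letting the parameters tend to zero. You should replace your sketch of (a) by this net argument.
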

%%%----------------------------------------------------------------------------
Part (a) follows from \cite[Theorems~3.1 and 3.6]{LS71}
(note that there is a typo in \cite[formula (3.7)]{LS71}, where the factor $2$
is missing). Part (b) was proved in \cite[Theorem 2.2]{S21}.

It follows from \eqref{eq:Lebow-Schechter} and Theorem~\ref{th:BCAP-DCAP}
that if a Banach space $E$ has the BCAP or the DCAP, then the essential norm 
$\|\cdot\|_{\mathcal{B}(E),\mathrm{e}}$ and the $m$-measure of noncompactness
$\|\cdot\|_{\mathcal{B}(E),m}$ are equivalent.

The condition $\|I - T\|_{\mathcal{B}(E)} \le M$ is often 
substituted by $\|T\|_{\mathcal{B}(E)} \le M$ in the
definition of BCAP (see, e.g., \cite{C01,CK90,LT77,LT79}, 
and the references therein). Let $m(E)$ be the greatest lower 
bound of the constants $M$ for which the conditions in this alternative 
definition of BCAP are satisfied. Clearly,
\[
m(E) - 1 \le M(E) \le m(E) + 1 .
\]
We are interested in $M(E)$ rather than in $m(E)$ 
because the former appears naturally in estimates for the essential norms of 
operators by their measures of noncompactnes 
(see Theorem~\ref{th:BCAP-DCAP} and \cite{AT87,ES05,LS71,S21}). 
It is well known that $m(L^p[0, 1]) = 1$, $1 \le p < \infty$ (see, e.g.,
\cite[Lemma~19.3.5]{P80}). The value of $M(L^p[0,1])$ was found in 
\cite[Theorem~3.2]{SS23}: if $1\le p<\infty$, then $M(L^p[0,1])=C_p$, where $C_p$ 
is the norm of the operator
\[
L^p[0, 1] \ni f \ \longmapsto \ f - \int_0^1 f(t)\, dt \in L^p[0, 1] ,
\]
i.e. $C_1=2$ and, for $1<p<\infty$, 
%%%
\begin{equation}\label{eq:Franchetti}
C_p:=\max_{0\le\alpha\le 1}
\left(\alpha^{p-1}+(1-\alpha)^{p-1}\right)^{1/p}
\left(\alpha^{1/(p-1)}+(1-\alpha)^{1/(p-1)}\right)^{1-1/p}
\end{equation}
%%%
(see \cite[formula (8)]{F90}). 

For a function $f \in L^1$ on the unit circle 
$\mathbb{T} :=\left\{z \in \mathbb{C} : \ |z| = 1\right\}$, let
\[
\widehat{f}(n) 
= 
\frac{1}{2\pi} \int_{-\pi}^\pi 
f\left(e^{i\theta}\right) e^{-i n\theta}\, d\theta , 
\quad 
n \in \mathbb{Z}
\]
be the Fourier coefficients of $f$. Let $X$ be a Banach space of 
measurable complex-valued functions on $\mathbb{T}$ continuously embedded 
into $L^1$. Let
\[
H[X] := 
\left\{
g\in X\ :\ \widehat{g}(n)=0\quad\mbox{for all}\quad n<0
\right\} 
\]
denote the abstract Hardy space built upon the space $X$. 
In the case $X = L^p$, where $1\le p\le\infty$, we will use the standard 
notation $H^p := H[L^p]$.

The classical Hardy spaces $H^p$ with $1<p<\infty$ have the 
BCAP and the DCAP with
%%%
\begin{equation}\label{eq:Shargorodsky-estimates}
M(H^p)\le 2^{|1-2/p|},
\quad
M^*(H^p)\le 2^{|1-2/p|}
\end{equation}
%%%
(see  \cite[Theorem~3.1]{S21}).

A measurable function $w:\mathbb{T}\to[0,\infty]$ is said to be a weight
if $0<w<\infty$ a.e. on $\mathbb{T}$. Let $1<p<\infty$ and $w$ be a weight.
Weighted Lebesgue spaces $L^p(w)$ consist of all measurable functions
$f:\mathbb{T}\to\mathbb{C}$ such that $fw\in L^p$. The norm in $L^p(w)$
is defined by
\[
\|f\|_{L^p(w)}:=\|fw\|_{L^p}
=
\left(\int_{\mathbb{T}}|f(t)|^pw^p(t)\,dm(t)\right)^{1/p},
\]
where $m$ is the Lebesgue measure on $\mathbb{T}$ normalized so that
$m(\mathbb{T})=1$. Estimates \eqref{eq:Shargorodsky-estimates} remain true 
for the weighted Hardy spaces $H^p(w):=H[L^p(w)]$.
%%%-----------------------------------------------------------------------------
\begin{theorem}
\label{th:BCAP-DCAP-weighted-Hardy}
Let $1<p<\infty$, $1/p+1/p'=1$, and let $w$ be a weight such that $w\in L^p$
and $1/w\in L^{p'}$. Then the weighted Hardy space $H^p(w)$ has 
the BCAP and the DCAP with
\[
M(H^p(w))\le2^{|1-2/p|},
\quad
M^*(H^p(w))\le 2^{|1-2/p|}.
\]
\end{theorem}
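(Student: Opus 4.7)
The plan is to reduce the weighted case to the unweighted estimates \eqref{eq:Shargorodsky-estimates} by realising $H^p(w)$ as isometrically isomorphic to $H^p$ via multiplication by a suitable outer function, and then transferring the approximation properties through this isomorphism.

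First, I would verify that $\log w\in L^1(\mathbb{T})$, which is the classical condition ensuring existence of the required outer function. Writing $\log w=\log^+w-\log^+(1/w)$ and applying the elementary bound $\log^+ s\le s^q/q$ for $q>0$, the hypothesis $w\in L^p$ controls the positive part (take $q=p$) and $1/w\in L^{p'}$ controls the negative part (take $q=p'$). I would then introduce the outer function
\[
h(z):=\exp\!\left(\frac{1}{2\pi}\int_{-\pi}^{\pi}\frac{e^{i\theta}+z}{e^{i\theta}-z}\log w(e^{i\theta})\,d\theta\right),\quad |z|<1,
\]
which is zero-free on the open disk and has boundary values satisfying $|h|=w$ almost everywhere on $\mathbb{T}$. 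Since $\int_{\mathbb{T}}|h|^p\,dm=\int_{\mathbb{T}}w^p\,dm<\infty$, one has $h\in H^p$; analogously, $1/h\in H^{p'}$.

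Next, I would show that $U\colon H^p(w)\to H^p$ defined by $Uf:=fh$ is an isometric isomorphism. Analyticity of $Uf$ on the open disk is automatic (as a product of two analytic functions), and the pointwise identity $|fh|=|f|w$ on $\mathbb{T}$ yields $\|fh\|_{L^p}=\|fw\|_{L^p}=\|f\|_{H^p(w)}$, hence $fh\in H^p$. The inverse $U^{-1}g:=g/h$ is well defined on $H^p$ because $h$ is zero-free on the open disk, and the same identity $|h|=w$ on $\mathbb{T}$ shows that $U^{-1}$ is also isometric.

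Finally, I would transfer the approximation properties. Given $\varepsilon>0$, a finite set $F\subset H^p(w)$, and a compact operator $T\in\mathcal{K}(H^p)$ realising \eqref{eq:BCAP} for $UF\subset H^p$ with the constant from \eqref{eq:Shargorodsky-estimates}, the conjugate $U^{-1}TU\in\mathcal{K}(H^p(w))$ realises \eqref{eq:BCAP} for $F$, since $\|I-U^{-1}TU\|_{H^p(w)}=\|I-T\|_{H^p}$ and $\|f-U^{-1}TUf\|_{H^p(w)}=\|Uf-TUf\|_{H^p}$. The DCAP transfer is analogous, applied to $(U^*)^{-1}G\subset(H^p)^*$ and using $(U^{-1}TU)^*=U^*T^*(U^*)^{-1}$ together with the fact that $U^*$ is itself an isometric isomorphism. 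The main technical point, which I expect to be the only nontrivial one, is the careful justification that $fh\in H^p$ and $g/h\in H^p(w)$ in the Fourier-coefficient sense (which uses that both factors extend analytically to the open disk and that division by an outer function preserves the Smirnov class); once this is settled, the remainder of the argument is essentially formal.
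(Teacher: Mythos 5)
Your proposal is correct and follows essentially the same route as the paper: one constructs the outer function with boundary modulus $w$ (the paper's Lemma~\ref{le:Hardy-isometric-isomorphism} does exactly this, justifying $Wf\in H^p$ via the Smirnov-type argument from \cite[Theorem~2.11]{D70} that you correctly flag as the main technical point), and then transfers the BCAP and DCAP through the resulting isometric isomorphism using \eqref{eq:Shargorodsky-estimates} (the paper's Lemma~\ref{le:BCAP-DCAP-isometry}). The only cosmetic difference is your direct verification of $\log w\in L^1$ via $\log^+s\le s^q/q$, where the paper first deduces $w,1/w\in L^1$ from Axiom (A5).
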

%%%----------------------------------------------------------------------------
Let $X$ be a Banach function space on the unit circle $\mathbb{T}$
equipped with the Lebesgue measure $dm$ and let $X'$ be its associate space
(see \cite[Ch.~1]{BS88}). We postpone the definitions of these notions until 
Section~\ref{sec:BFS}. Here we only mention that the class of Banach function
spaces is very rich, it includes all Lebesgue spaces $L^p$, $1\le p\le\infty$,
Orlicz spaces $L^\varphi$ (see, e.g., \cite[Ch.~4, Section~8]{BS88}), and
Lorentz spaces $L^{p,q}$ (see, e.g., \cite[Ch.~4, Section~4]{BS88}). 
For a weight $w$, the weighted space $X(w)$ consists of all measurable functions 
$f:\mathbb{T}\to\mathbb{C}$ such that $fw\in X$. We equip it with the norm 
\[
\|f\|_{X(w)}=\|fw\|_X. 
\]
We will suppose that $w\in X$ and $1/w\in X'$. 
Then $X(w)$ is a Banach function space itself and
$L^\infty\hookrightarrow X(w)\hookrightarrow L^1$ 
(see \cite[Lemma~2.3(b)]{K03}).

For $f\in X$ we will use the following notation:
\[
(\tau_\vartheta f)(e^{it}):=f(e^{i(t-\vartheta)}),
\quad
t,\vartheta\in[-\pi,\pi].
\]
A Banach function space is said to be translation-invariant if for every
$f\in X$ and every $\vartheta\in[-\pi,\pi]$, one has $\tau_\vartheta f\in X$
and $\|\tau_\vartheta f\|_X=\|f\|_X$. Note that all rearrangement-invariant
Banach function spaces (see \cite[Ch.~2]{BS88}) are translation-invariant.

The following analogue of \eqref{eq:Shargorodsky-estimates} holds for the 
spaces $H[X(w)]$.
%%%----------------------------------------------------------------------------
\begin{theorem}
\label{th:BCAP-DCAP-weighted-abstract-Hardy}
Let $X$ be a translation-invariant Banach function space with the associate
space $X'$ and let $w$ be a weight such that $w\in X$ and $1/w\in X'$.
%%%
\begin{enumerate}
\item[{\rm(a)}]
If $X$ is separable, then the abstract Hardy space has the BCAP with 
\[
M(H[X(w)]) \le 2.
\] 

\item[{\rm(b)}]
If $X$ is reflexive, then the abstract Hardy space $H[X(w)]$ has
the BCAP and the DCAP with
\[
M(H[X(w)])\le 2,
\quad
M^*(H[X(w)]) \le 2.
\]
\end{enumerate}
\end{theorem}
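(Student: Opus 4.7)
The plan is to reduce everything to the unweighted Hardy space $H[X]$ via multiplication by an outer function, and then exploit the translation invariance of $X$ to bound the Fejér means. Concretely, since $w\in X$ and $1/w\in X'$ and every Banach function space on the finite-measure circle $\mathbb{T}$ embeds between $L^\infty$ and $L^1$, we have $w,1/w\in L^1$ and hence $\log w\in L^1$; this secures the existence of an outer function $\phi$ with $|\phi|=w$ on $\mathbb{T}$. The lattice structure of $X$ and $X'$ gives $\phi\in X$ and $1/\phi\in X'$, and the subharmonic majorization $|\phi(rz)|\le(P_r*w)(z)$ (with its analogue for $1/\phi$) shows $\phi,1/\phi\in H^1$. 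Thus $\phi\in H[X]$ and $1/\phi\in H[X']$, and the multiplication operator $M_\phi:f\mapsto\phi f$ is an isometric isomorphism $H[X(w)]\to H[X]$ with inverse $M_{1/\phi}$. Since BCAP and DCAP with their constants are preserved under isometric isomorphism, it suffices to prove (a) and (b) for the unweighted space $H[X]$.

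For part (a), let $\sigma_n$ be the $n$-th Fejér (Cesàro) mean, i.e. convolution with the Fejér kernel $K_n$. Each restriction $\sigma_n|_{H[X]}$ has image in the space of analytic polynomials of degree $\le n$, so it is finite-rank, hence compact. The Bochner-integral identity $\sigma_n g=\int_{-\pi}^{\pi}K_n(\theta)\tau_\theta g\,d\theta/(2\pi)$ together with the translation invariance of $X$ yields $\|\sigma_n\|_{\mathcal{B}(X)}\le 1$, whence $\|I-\sigma_n\|_{\mathcal{B}(H[X])}\le 2$. When $X$ is separable, translation acts strongly continuously on $X$, and the standard approximate-identity argument forces $\sigma_n g\to g$ in $X$ for every $g\in X$. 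These two facts together yield $M(H[X])\le 2$, hence $M(H[X(w)])\le 2$.

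For part (b), a reflexive Banach function space on $\mathbb{T}$ is automatically separable, so BCAP follows from part (a). For DCAP we retain the same $\sigma_n$: since $K_n$ is real and even, $\sigma_n$ is self-adjoint with respect to the pairing $(f,g)\mapsto\int fg$, and thus the adjoint of $\sigma_n|_{H[X]}$ acts on $(H[X])^*\cong X'/(H[X])^\perp$ as the quotient of the Fejér operator on $X'$. Reflexivity of $X$ implies reflexivity and hence separability of $X'$, so $\sigma_n\to I$ strongly on $X'$; this gives $\sigma_n^\ast z\to z$ for every $z\in(H[X])^*$, and together with $\|I-\sigma_n^\ast\|\le 2$ yields $M^*(H[X])\le 2$, hence $M^*(H[X(w)])\le 2$.

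The main obstacle is conceptual: $X(w)$ itself is not translation invariant, so Fejér means cannot be estimated directly on it. The outer factorization $w=|\phi|$ is the device that transports the problem onto $X$, where translation invariance supplies the needed contraction. The main technical verification is thus the $H^1$-membership of the outer factor $\phi$ (and of $1/\phi$), which rests on the subharmonic majorization and on the continuous embeddings of $X$ and $X'$ into $L^1$.
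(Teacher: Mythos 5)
Your proposal is correct and follows essentially the same route as the paper: reduction to the unweighted space $H[X]$ via multiplication by the outer function with modulus $w$ (the paper's Lemma~\ref{le:Hardy-isometric-isomorphism} combined with the invariance of $M(\cdot)$ and $M^*(\cdot)$ under isometric isomorphism), and then the Fej\'er-mean argument on $H[X]$ with $\|I-\mathbf{K}_n\|\le 2$, strong convergence from separability, and the quotient-space description of the adjoint for the DCAP (Theorem~\ref{th:BCAP-DCAP-abstract-Hardy}). The only cosmetic difference is that you bound $\|\mathbf{K}_n\|_{\mathcal{B}(X)}$ via a Bochner-integral representation (legitimate here since separability gives strong continuity of translations), whereas the paper uses a duality/Tonelli argument valid for arbitrary translation-invariant $X$.
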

%%%----------------------------------------------------------------------------
The paper is organised as follows. In Section~\ref{sec:preliminaries}, we 
collect preliminaries on Banach function spaces. Further, we give some 
estimates for the adjoints to restrictions of operators.

In Section~\ref{sec:BCAP-DCAP-unweighted}, we show that if a 
translation-invariant Banach function space $X$ is separable, then the Hardy 
space $H[X]$ has the BCAP with $M(H[X])\le 2$. Moreover, if $X$ is reflexive, 
then $H[X]$ has the BCAP and the DCAP with $M(H[X])\le 2$ and $M^*(H[X])\le 2$,
respectively.

In Section~\ref{sec:BCAP-DCAP-weighted}, we observe that if $X$ is a Banach
function space and $w$ is a weight such that $w\in X$ and $1/w\in X'$, then
the Hardy spaces $H[X]$ and $H[X(w)]$ are isometrically isomorphic. This result 
combined with \eqref{eq:Shargorodsky-estimates} and the main result of 
Section~\ref{sec:BCAP-DCAP-unweighted} implies 
Theorems~\ref{th:BCAP-DCAP-weighted-Hardy} 
and~\ref{th:BCAP-DCAP-weighted-abstract-Hardy}.

The main part of the paper is Section~\ref{sec:question}, where some open problems
concerning approximation properties of 
Hardy spaces are stated and discussed.
%%%----------------------------------------------------------------------------
\section{Preliminaries}\label{sec:preliminaries}
\subsection{Banach function spaces}\label{sec:BFS}
Let $\mathcal{M}$ be the set of all measurable extended
complex-valued functions on 
$\mathbb{T}$ equipped with the normalized measure $dm(t)=|dt|/(2\pi)$ and let 
$\mathcal{M}^+$ be the subset of functions in $\mathcal{M}$ whose values lie 
in $[0,\infty]$. 

Following \cite[Ch.~1, Definition~1.1]{BS88}, a mapping 
$\rho: \mathcal{M}^+\to [0,\infty]$ is called a Banach function norm if, 
for all functions $f,g, f_n\in \mathcal{M}^+$ with $n\in\mathbb{N}$, and for 
all constants $a\ge 0$, the following  properties hold:
%%%
\begin{eqnarray*}
{\rm (A1)} & &
\rho(f)=0  \Leftrightarrow  f=0\ \mbox{a.e.},
\
\rho(af)=a\rho(f),
\
\rho(f+g) \le \rho(f)+\rho(g),\\
{\rm (A2)} & &0\le g \le f \ \mbox{a.e.} \ \Rightarrow \ 
\rho(g) \le \rho(f)
\quad\mbox{(the lattice property)},\\
{\rm (A3)} & &0\le f_n \uparrow f \ \mbox{a.e.} \ \Rightarrow \
       \rho(f_n) \uparrow \rho(f)\quad\mbox{(the Fatou property)},\\
{\rm (A4)} & & \rho(\mathbbm{1}) <\infty,\\
{\rm (A5)} & &\int_{\mathbb{T}} f(t)\,dm(t) \le C\rho(f)
\end{eqnarray*}
%%%%
with {a constant} $C \in (0,\infty)$ that may depend on $\rho$,  but is 
independent of $f$. When functions differing only on a set of measure zero 
are identified, the set $X$ of all functions $f\in \mathcal{M}$ for which 
$\rho(|f|)<\infty$ is called a Banach function space. For each $f\in X$, 
the norm of $f$ is defined by $\|f\|_X :=\rho(|f|)$. The set $X$ equipped 
with the natural linear space operations and with this norm becomes a Banach 
space (see \cite[Ch.~1, Theorems~1.4 and~1.6]{BS88}). If $\rho$ is a Banach 
function norm, its associate norm $\rho'$ is defined on $\mathcal{M}^+$ by
\[
\rho'(g):=\sup\left\{
\int_\mathbb{T} f(t)g(t)\,dm(t) \ : \ 
f\in\mathcal{M}^+, \ \rho(f) \le 1
\right\}, \quad g\in \mathcal{M}^+.
\]
It is a Banach function norm itself (\cite[Ch.~1, Theorem~2.2]{BS88}).
The Banach function space $X'$ defined by the Banach function 
norm $\rho'$ is called the associate space (K\"othe dual) of $X$. 
The associate space $X'$ can be viewed as a subspace of the 
Banach dual space $X^*$ (see \cite[Ch.~1, Theorem~2.9]{BS88}). 
The following lemma can be proved as in the non-periodic case
(see \cite[Lemma~2.1]{KS19-PLMS}).
%%%----------------------------------------------------------------------------
\begin{lemma}\label{le:TI-X-and-Xprime}
Let $X$ be a Banach function space and $X'$ be its associate space. Then $X$ 
is translation-invariant if and only if $X'$ is translation-invariant.
\end{lemma}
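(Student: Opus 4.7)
The plan is to prove the forward implication, $X$ translation invariant $\Rightarrow$ $X'$ translation invariant, directly from the definition of the associate norm $\rho'$, and then to obtain the converse by applying this same forward implication to $X'$ in place of $X$ and invoking the Lorentz--Luxemburg theorem that $X'' = X$ isometrically (see \cite[Ch.~1, Theorem~2.7]{BS88}).

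For the forward direction, I would fix $\vartheta \in [-\pi,\pi]$ and $g \in \mathcal{M}^+$ and start from the definition of $\rho'(\tau_\vartheta g)$. The first key step is the adjoint-type identity
\[
\int_{\mathbb{T}} f(t)\, (\tau_\vartheta g)(t)\, dm(t) = \int_{\mathbb{T}} (\tau_{-\vartheta} f)(t)\, g(t)\, dm(t), \qquad f \in \mathcal{M}^+,
\]
which is an immediate consequence of the rotation invariance of the normalized Lebesgue measure $dm$ on $\mathbb{T}$. Substituting this into the supremum defining $\rho'(\tau_\vartheta g)$ and reindexing via $h := \tau_{-\vartheta} f$ should yield $\rho'(\tau_\vartheta g) = \rho'(g)$, provided the map $f \mapsto \tau_{-\vartheta} f$ sends the set $\{f \in \mathcal{M}^+ : \rho(f) \le 1\}$ bijectively onto itself.

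The only step that requires any care is this bijection. For $f \in \mathcal{M}^+$ with $\rho(f) < \infty$, the equality $\rho(\tau_{-\vartheta} f) = \rho(f)$ is precisely the hypothesis that $X$ is translation invariant. For $f$ with $\rho(f) = \infty$ one needs $\rho(\tau_{-\vartheta} f) = \infty$ as well; this follows because $\rho(\tau_{-\vartheta} f) < \infty$ would put $\tau_{-\vartheta} f$ in $X$, and translation invariance of $X$ would then give $\rho(f) = \rho(\tau_\vartheta \tau_{-\vartheta} f) < \infty$, a contradiction. Beyond this small verification, the whole argument is a routine transcription of \cite[Lemma~2.1]{KS19-PLMS} from the real line to the circle, and I do not foresee any further obstacle.
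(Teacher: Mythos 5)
Your proposal is correct and follows essentially the same route as the argument the paper defers to (the non-periodic case in the cited reference): translation invariance of $\rho'$ via rotation invariance of $dm$ and the bijection $f\mapsto\tau_{-\vartheta}f$ of the unit ball $\{f\in\mathcal{M}^+:\rho(f)\le 1\}$, followed by the Lorentz--Luxemburg identity $X''=X$ for the converse. The only superfluous step is your discussion of $f$ with $\rho(f)=\infty$, since the supremum defining $\rho'$ ranges only over the unit ball, where $\rho(f)\le 1<\infty$.
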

%%%----------------------------------------------------------------------------
\subsection{Adjoints to restrictions of operators}
In this subsection, we present some simple results, for which we could not find a 
convenient reference.

Let $X$ and $Y$ be Banach spaces, $X_0 \subseteq X$ and $Y_0 \subseteq Y$ be 
closed linear subspaces, and let $A \in \mathcal{B}(X, Y)$ be such that 
$A(X_0) \subseteq Y_0$. Let $A_0 \in \mathcal{B}(X_0, Y_0)$ be the restriction 
of $A$ to $X_0$:
\[
A_0 x_0 := Ax_0 \in Y_0 \ \mbox{ for all } \ x_0 \in X_0 .
\]
Let
\[
X_0^\perp := \{x^* \in X^* : \ x^*(x_0) = 0 \ \mbox{ for all } \ x_0 \in X_0\}
\]
and let $Y_0^\perp$ be defined similarly. Then $X_0^*$ and $Y_0^*$ are 
isometrically isomorphic to the quotient spaces $X^*/X_0^\perp$ and 
$Y^*/Y_0^\perp$, respectively (see, e.g., \cite[Theorem~7.1]{D70}). We will 
identify these spaces and will denote by $[x^*]$ the element of 
$X^*/X_0^\perp$ corresponding to $x^* \in X^*$, and similarly for $[y^*]$.
  
It is easy to see that $A^*(Y_0^\perp) \subseteq X_0^\perp$. Indeed, take any 
$y^*_0 \in Y_0^\perp$ and $x_0 \in X_0$. Since $Ax_0 \in Y_0$, one has
\[
(A^*y^*_0)(x_0) = y^*_0(Ax_0) = 0 .
\]
So, $A^*y^*_0 \in X_0^\perp$. Hence the operator $[A^*]$,
\[
[A^*] [y^*] := [A^* y^*] \in X^*/X_0^\perp , \quad [y^*] \in Y^*/Y_0^\perp
\]
is a well defined element of 
$\mathcal{B}(Y^*/Y_0^\perp, X^*/X_0^\perp) = \mathcal{B}(Y^*_0, X^*_0)$, and
it is easy to see that $A_0^* = [A^*]$. Indeed, one has for every 
$[y^*] \in Y^*/Y_0^\perp$ and $x_0 \in X_0$,
%%%
\begin{align*}
(A_0^*[y^*])(x_0) & = [y^*](A_0x_0) = [y^*](Ax_0) = y^*(Ax_0) = (A^*y^*)(x_0) 
\\
& = [A^*y^*](x_0) = ([A^*] [y^*])(x_0).
\end{align*}
%%%
%%%----------------------------------------------------------------------------
\begin{lemma}\label{le:adj-restr}
Let $X$ and $Y$ be Banach spaces, $X_0\subseteq X$ and $Y_0\subseteq Y$ be
closed linear subspaces, and $A\in\mathcal{B}(X,Y)$ be such that
$A(X_0)\subseteq Y_0$. If $A_0:=A|_{X_0}$, then for every $y^*\in Y^*$,
one has
\[
\|A_0^*[y^*]\|_{X_0^*}
\le 
\|A^*y^*\|_{X^*},
\]
where $[y^*]$ is the element of $Y^*/Y_0^\perp$ corresponding to $y^*$.
\end{lemma}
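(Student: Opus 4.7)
The plan is to leverage the two pieces of structure that were set up in the paragraphs immediately preceding the lemma: (i) the identification $A_0^\ast = [A^\ast]$ as an operator from $Y^\ast/Y_0^\perp$ to $X^\ast/X_0^\perp$, and (ii) the isometric identification of $X_0^\ast$ with the quotient $X^\ast/X_0^\perp$. Once these are in place, the desired inequality reduces to the elementary fact that the quotient norm of a coset is bounded above by the norm of any one of its representatives.

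Concretely, I would proceed as follows. First, invoke the identity $A_0^\ast = [A^\ast]$ already derived in the excerpt to rewrite
\[
A_0^\ast [y^\ast] = [A^\ast y^\ast],
\]
viewed as an element of $X^\ast/X_0^\perp$. Second, by the isometric identification $X_0^\ast \cong X^\ast/X_0^\perp$ (cited from \cite[Theorem~7.1]{D70}), the left-hand side of the claimed inequality equals the quotient norm
\[
\|A_0^\ast[y^\ast]\|_{X_0^\ast} = \|[A^\ast y^\ast]\|_{X^\ast/X_0^\perp} = \inf_{\xi \in X_0^\perp}\|A^\ast y^\ast + \xi\|_{X^\ast}.
\]
Third, estimate this infimum from above by taking the particular representative $\xi = 0 \in X_0^\perp$, which yields
\[
\|A_0^\ast[y^\ast]\|_{X_0^\ast} \le \|A^\ast y^\ast\|_{X^\ast},
\]
as required.

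There is essentially no genuine obstacle here: all the conceptual work, namely checking that $A^\ast(Y_0^\perp) \subseteq X_0^\perp$ and that $A_0^\ast$ agrees with the induced map $[A^\ast]$ on the quotient, has already been done in the narrative preceding the lemma statement. The only thing to be careful about is bookkeeping with the identifications between $X_0^\ast$ and $X^\ast/X_0^\perp$ (and similarly for $Y_0^\ast$), so that the quotient norm on the right-hand side of the isometry matches the dual norm on $X_0^\ast$ on the left; this is exactly the content of the cited duality theorem. Hence the proof is a short three-line deduction from the setup.
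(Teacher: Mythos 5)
Your proposal is correct and follows exactly the same route as the paper's own proof: identify $A_0^*[y^*]$ with the coset $[A^*y^*]$ in $X^*/X_0^\perp$, use the isometric identification of $X_0^*$ with that quotient, and bound the quotient norm by the norm of the representative $A^*y^*$. (Your version even fixes a small typo in the paper, which writes the infimum over $x_0\in X_0$ instead of over elements of $X_0^\perp$.)
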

%%%----------------------------------------------------------------------------
\begin{proof}
We have
%%%
\begin{align*}
\|A_0^*[y^*]\|_{X_0^*} 
& = 
\|A_0^*[y^*]\|_{X^*/X_0^\perp} =  \|[A^*][y^*]\|_{X^*/X_0^\perp} 
=  
\|[A^*y^*]\|_{X^*/X_0^\perp} 
\nonumber \\
& = \inf_{x_0 \in X_0} \|A^*y^* + x_0\|_{X^*} \le \|A^*y^*\|_{X^*}\, .
\end{align*}
%%%
which completes the proof.
\end{proof}
%%%----------------------------------------------------------------------------
\section{Bounded compact and dual compact approximation properties 
of abstract Hardy spaces built upon translation-invariant spaces}
\label{sec:BCAP-DCAP-unweighted}
%%%----------------------------------------------------------------------------
\subsection{Continuity of shifts in separable translation-invariant 
Banach function spaces}
We start with the following simple lemma.
%%%----------------------------------------------------------------------------
\begin{lemma}\label{le:continuity-translations}
Let $X$ be a translation-invariant Banach function space. If $X$ is separable,
then for every $f\in X$,
\begin{equation}\label{eq:continuity-translations}
\lim_{\vartheta\to 0}\|\tau_\vartheta f-f\|_X=0.
\end{equation}
\end{lemma}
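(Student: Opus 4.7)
The plan is a standard density argument. First I would verify \eqref{eq:continuity-translations} for continuous functions $f\in C(\mathbb{T})$, then show that $C(\mathbb{T})$ is dense in $X$, and finally combine the two with translation invariance.

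For continuous $f$, note that $C(\mathbb{T})\subseteq L^\infty$, and by the lattice property (A2) together with (A4), every $h\in L^\infty$ satisfies $\|h\|_X\le \|h\|_{L^\infty}\|\mathbbm{1}\|_X$. Since $\mathbb{T}$ is compact, $f$ is uniformly continuous, so $\|\tau_\vartheta f-f\|_{L^\infty}\to 0$ as $\vartheta\to 0$, and consequently $\|\tau_\vartheta f-f\|_X\to 0$.

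For the density step, I would invoke the fact that for a Banach function space separability is equivalent to every function having absolutely continuous norm (see, e.g., \cite[Ch.~1, Theorem~5.5 and Corollary~5.6]{BS88}). Absolute continuity implies that simple functions are dense in $X$, so it suffices to approximate a characteristic function $\chi_E$ by continuous functions in the norm of $X$. Given $\varepsilon>0$, regularity of Lebesgue measure yields an open $U\supseteq E$ and a closed $K\subseteq E$ with $|U\setminus K|$ arbitrarily small, and Urysohn's lemma produces $\varphi\in C(\mathbb{T})$ with $\chi_K\le \varphi\le \chi_U$, so $|\chi_E-\varphi|\le \chi_{U\setminus K}$. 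Absolute continuity of the norm then converts the smallness of $|U\setminus K|$ into $\|\chi_E-\varphi\|_X<\varepsilon$.

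Once density and the continuous case are established, the conclusion is immediate: given $f\in X$ and $\varepsilon>0$, choose $g\in C(\mathbb{T})$ with $\|f-g\|_X<\varepsilon/3$, then $\vartheta$ small enough that $\|\tau_\vartheta g-g\|_X<\varepsilon/3$. Translation invariance gives $\|\tau_\vartheta(f-g)\|_X=\|f-g\|_X$, so the triangle inequality yields $\|\tau_\vartheta f-f\|_X<\varepsilon$. The main obstacle is the density step: one must pass from separability to absolute continuity of the norm, and then from approximation in measure to approximation in $X$-norm — this is precisely where absolute continuity is essential, and without it (as in $X=L^\infty$) the conclusion would fail.
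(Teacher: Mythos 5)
Your proof is correct and follows essentially the same route as the paper: approximate $f$ by a continuous function, use the uniform bound $\|h\|_X\le\|h\|_{L^\infty}\|\mathbbm{1}\|_X$ and uniform continuity on $\mathbb{T}$, and conclude by the $\varepsilon/3$ triangle inequality with translation invariance. The only difference is that you prove the density of $C(\mathbb{T})$ in $X$ directly via absolute continuity of the norm, whereas the paper cites this equivalence from the literature; your argument for that step is sound.
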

%%%----------------------------------------------------------------------------
\begin{proof}
By \cite[Lemma~2.2.1]{K17}, a Banach function space $X$ is separable if and 
only if the set of continuous functions $C$ is dense in $X$. Let $f\in X$
and $\varepsilon>0$. Then there exists $g\in C$ such that 
$\|f-g\|_X<\varepsilon/3$. Taking into account that $X$ is 
translation-invariant, we see that for all $\vartheta\in[-\pi,\pi]$,
%%%
\begin{align*}
\|\tau_\vartheta f-f\|_X
&\le 
\|\tau_\vartheta f-\tau_\vartheta g\|_X
+
\|\tau_\vartheta g-g\|_X
+
\|g-f\|_X
\\
&=
2\|f-g\|_X+\|\tau_\vartheta g-g\|_X
\\
&<\frac{2}{3}\varepsilon+\|\mathbbm{1}\|_X\|\tau_\vartheta g-g\|_C.
\end{align*}
%%%
Since
\[
\lim_{\vartheta\to 0}\|\tau_\vartheta g-g\|_C=0,
\]
the above inequality yields
\[
\limsup_{\vartheta\to 0}\|\tau_\vartheta f-f\|_X
\le\frac{2}{3}\varepsilon<\varepsilon.
\]
Letting $\varepsilon\to 0$, we arrive at 
\eqref{eq:continuity-translations}.
\end{proof}
%%%----------------------------------------------------------------------------
\subsection{Convolutions with integrable functions on translation-invariant 
Banach function spaces}
Recall that the convolution of two functions $f,g\in L^1$
is defined by
\[
(f*g)(e^{i\varphi}):=\frac{1}{2\pi}
\int_{-\pi}^\pi f(e^{i(\varphi-\theta)})g(e^{i\theta})\,d\theta.
\]

The following lemmas might be known to experts, however we were not able to 
find an explicit reference.
%%%----------------------------------------------------------------------------
\begin{lemma}\label{le:convolution}
Suppose that $X$ is a translation-invariant Banach function spaces. 
If $K \in L^1$, then the convolution operator $C_K$ defined by
%%%
\begin{equation}\label{eq:convolution-0}
C_K g =K*g,
\quad g\in X,
\end{equation}
%%%
is bounded on $X$ and 
%%%
\begin{equation}\label{eq:convolution-1}
\|C_K\|_{\mathcal{B}(X)}\le \|K\|_{L^1}.
\end{equation}
%%%
If, in addition, $K\ge 0$, then 
%%%
\begin{equation}\label{eq:convolution-2}
\|C_K\|_{\mathcal{B}(X)}=\|K\|_{L^1}.
\end{equation}
%%%
\end{lemma}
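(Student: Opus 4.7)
The plan is to prove \eqref{eq:convolution-1} via duality and to obtain \eqref{eq:convolution-2} by testing $C_K$ on the constant function. The main tools are the Banach function space identity $\|f\|_X = \sup\{\int_{\mathbb{T}} |f|h\,dm : h \in \mathcal{M}^+,\ \|h\|_{X'} \le 1\}$ (see \cite[Ch.~1, Theorem~2.7]{BS88}), the H\"older inequality for Banach function spaces (see \cite[Ch.~1, Theorem~2.4]{BS88}), and the translation invariance of $X$.

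First I would observe that $X \hookrightarrow L^1$ (by property (A5)), so $g \in L^1$, and Young's inequality for $L^1$ ensures that $|K*g| \le |K|*|g|$ is finite almost everywhere. Then, for any nonnegative $h \in \mathcal{M}^+$ with $\|h\|_{X'} \le 1$, I would apply Tonelli's theorem (legitimate since the integrand is nonnegative) together with the substitutions $t = us$ and then $s = u^{-1}s'$ on $\mathbb{T}$ to rewrite
\begin{equation*}
\int_{\mathbb{T}} (|K|*|g|)(t)\, h(t)\, dm(t) = \int_{\mathbb{T}} |K(u)| \int_{\mathbb{T}} (\tau_{\arg u}|g|)(s')\, h(s')\, dm(s')\, dm(u).
\end{equation*}
The H\"older inequality combined with the identity $\|\tau_{\arg u} |g|\|_X = \|g\|_X$ bounds the inner integral by $\|g\|_X \|h\|_{X'}$, and integration in $u$ produces the estimate $\int_{\mathbb{T}} (|K|*|g|)(t)\, h(t)\, dm(t) \le \|K\|_{L^1}\|g\|_X\|h\|_{X'}$. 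Taking the supremum over admissible $h$ and using the lattice property (A2) then yields $K*g \in X$ together with the bound \eqref{eq:convolution-1}.

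For the second part, when $K \ge 0$, property (A4) gives $\mathbbm{1} \in X$, and a direct calculation exploiting the rotation invariance of $dm$ shows that
\begin{equation*}
(C_K\mathbbm{1})(e^{i\varphi}) = \frac{1}{2\pi}\int_{-\pi}^{\pi} K(e^{i(\varphi-\theta)})\, d\theta = \|K\|_{L^1},
\end{equation*}
so $C_K\mathbbm{1} = \|K\|_{L^1}\mathbbm{1}$ and hence $\|C_K\|_{\mathcal{B}(X)} \ge \|K\|_{L^1}$, which combined with \eqref{eq:convolution-1} gives \eqref{eq:convolution-2}.

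The only genuine technical point is justifying the iterated-integral manipulations, but since every integrand in sight is nonnegative, Tonelli's theorem applies without any further integrability check, and the two substitutions are simply expressions of translation invariance of Haar measure on $\mathbb{T}$. I therefore do not anticipate a serious obstacle; the argument requires no regularity assumption on $X$ (such as separability or reflexivity) beyond the translation invariance already built into the hypothesis, and Lemma~\ref{le:TI-X-and-Xprime} is not needed here since the H\"older step invokes translation invariance of $X$ only.
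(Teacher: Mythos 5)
Your proof is correct and follows essentially the same route as the paper's: Tonelli's theorem plus H\"older's inequality for Banach function spaces and translation invariance to bound $\int(|K|*|g|)h\,dm$ by $\|K\|_{L^1}\|g\|_X\|h\|_{X'}$, the Lorentz--Luxemburg theorem ($X=X''$ with equal norms) to convert this into \eqref{eq:convolution-1}, and testing on $\mathbbm{1}$ for the lower bound when $K\ge 0$. The only cosmetic difference is that you pass through $|K|*|g|$ and invoke the lattice property (A2), whereas the paper keeps the absolute values inside the iterated integral directly; the substance is identical.
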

%%%----------------------------------------------------------------------------
\begin{proof}
For every $h \in X'$, in view of Tonelli's theorem
(see, e.g., \cite[Theorem~5.28]{A20}) and H\"older's inequality for Banach
function spaces (see \cite[Ch.~1, Theorem~2.4]{BS88}), one has
%%%
\begin{align}
& 
\int_{-\pi}^\pi\left|
(K\ast g)(e^{i \vartheta}) 
h(e^{i \vartheta})\right|\, d\vartheta
\le 
\frac1{2\pi} \int_{-\pi}^\pi\int_{-\pi}^\pi 
\left|K(e^{i (\vartheta - \theta)})\right| 
\left|g(e^{i \theta})\right| 
\left|h(e^{i \vartheta})\right|\, d\theta\, d\vartheta 
\nonumber\\
&\qquad= 
\frac1{2\pi} \int_{-\pi}^\pi\int_{-\pi}^\pi 
\left|K(e^{i \theta})\right| 
\left|g(e^{i (\vartheta - \theta)})\right| 
\left|h(e^{i \vartheta})\right|\, d\theta\, d\vartheta 
\nonumber\\
&\qquad= 
\frac1{2\pi} \int_{-\pi}^\pi \left|K(e^{i \theta})\right| 
\left(\int_{-\pi}^\pi 
\left|(\tau_\theta g)(e^{i \vartheta})\right| 
\left|h(e^{i \vartheta})\right|\, d\vartheta\right) d\theta 
\nonumber\\
&\qquad \le 
 \int_{-\pi}^\pi \left|K(e^{i \theta})\right| 
\|\tau_\theta g\|_{X} \|h\|_{X'}\, d\theta 
= 
2\pi\|K\|_{L^1} \|g\|_{X}  \|h\|_{X'} .
\label{eq:convolution-3}
\end{align}
%%%
In view of the Lorentz-Luxemburg theorem (see 
\cite[Ch.~1, Theorem~2.7]{BS88}), the last inequality implies that
%%%
\begin{align*}
\|K*g\|_{X}
&=\|K*g\|_{X''}
\\
&=
\sup\left\{
\frac{1}{2\pi}\int_{-\pi}^\pi\left|
(K\ast g)(e^{i \vartheta}) 
h(e^{i \vartheta})\right|\, d\vartheta: \ h\in X',
\|h\|_{X'}\le 1
\right\}
\\
&\le\|K\|_{L^1} \|g\|_{X},
\end{align*}
%%%
which implies \eqref{eq:convolution-1}.

If, in addition, we suppose that $K\ge 0$, then
for a.e. $\varphi\in [-\pi,\pi]$,
\[
(K*\mathbbm{1})(e^{i\varphi})
=
\frac{1}{2\pi}\int_{-\pi}^{\pi}K(e^{i(\varphi-\theta)})\,d\theta
=
\frac{1}{2\pi}\int_{-\pi}^{\pi}K(e^{i t})\,dt
=
\|K\|_{L^1}.
\]
Hence,
\[
\|C_K\|_{\mathcal{B}(X)}
=
\sup_{f\in X\setminus\{0\}}\frac{\|K*f\|_X}{\|f\|_X}
\ge 
\frac{\|K*\mathbbm{1}\|_X}{\|\mathbbm{1}\|_X}
=
\frac{\|K\|_{L^1}\|\mathbbm{1}\|_X}{\|\mathbbm{1}\|_X}=\|K\|_{L^1}.
\]
Combining this inequality with \eqref{eq:convolution-1}, we arrive
at \eqref{eq:convolution-2}.
\end{proof}
%%%----------------------------------------------------------------------------
\subsection{BCAP and DCAP of abstract Hardy spaces built upon
trans\-la\-tion-in\-vari\-ant Banach function spaces}
Now we are in a position to prove the main result of this section.
%%%----------------------------------------------------------------------------
\begin{theorem}\label{th:BCAP-DCAP-abstract-Hardy}
Let $X$ be a translation-invariant Banach function space.
\begin{enumerate}
\item[{\rm(a)}]
If $X$ is separable, then the abstract Hardy space $H[X]$ has the BCAP with
\[
M(H[X])\le 2.
\]

\item[{\rm(b)}]
If $X$ is reflexive, then the abstract Hardy space $H[X]$ has the BCAP
and DCAP with
\[
M(H[X])\le 2,
\quad
M^*(H[X])\le 2.
\]
\end{enumerate}
\end{theorem}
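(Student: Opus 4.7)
The natural approximating operator is the Fej\'er operator $T_n := C_{K_n}$, where $K_n$ is the classical Fej\'er kernel on $\mathbb{T}$, a nonnegative trigonometric polynomial of degree $n$ with $\|K_n\|_{L^1}=1$. By Lemma~\ref{le:convolution}, $\|T_n\|_{\mathcal{B}(X)}=1$, so $\|I-T_n\|_{\mathcal{B}(X)}\le 2$. Since $\widehat{K}_n$ is supported in $\{-n,\dots,n\}$, for every $f\in H[X]$ the image $T_nf$ is an analytic trigonometric polynomial of degree $\le n$; hence $T_n(H[X])$ is finite-dimensional, and the restriction $T_{n,0}:=T_n|_{H[X]}$ is a compact operator on $H[X]$ with $\|I-T_{n,0}\|_{\mathcal{B}(H[X])}\le 2$.

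For part (a), fix $\varepsilon>0$ and a finite set $F\subset H[X]$. Using $\widehat{K}_n(0)=1$ and repeating the Tonelli/translation-invariance manipulation from the proof of Lemma~\ref{le:convolution}, I obtain
\[
\|T_nf-f\|_X \le \frac{1}{2\pi}\int_{-\pi}^{\pi} K_n(e^{i\theta})\,\|\tau_\theta f-f\|_X\,d\theta.
\]
Since $X$ is separable, Lemma~\ref{le:continuity-translations} says that $\theta\mapsto\|\tau_\theta f-f\|_X$ is continuous and vanishes at $\theta=0$. Combined with the standard approximate-identity property of $\{K_n\}$ (concentration of $L^1$-mass at $0$), this gives $\|T_nf-f\|_X\to 0$. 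Taking $n$ large enough, uniformly over the finite set $F$, yields the approximation condition in \eqref{eq:BCAP} with $M=2$.

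For part (b), reflexivity of $X$ forces both $X$ and $X'$ to have absolutely continuous norms, and hence both are separable; moreover, $X'$ is translation-invariant by Lemma~\ref{le:TI-X-and-Xprime}. The BCAP bound is already covered by (a). For the DCAP bound, fix $\varepsilon>0$ and a finite set $G\subset H[X]^*$. Using the reflexive identification $X^*\cong X'$ and a straightforward Fubini/change-of-variable argument modelled on the computation in Lemma~\ref{le:convolution}, the Banach adjoint of $T_n$ acts on $X'$ as convolution with the reflected kernel $\widetilde{K}_n(e^{i\theta})=K_n(e^{-i\theta})$; since the Fej\'er kernel is even, $\widetilde{K}_n=K_n$ and thus $T_n^*=T_n$ on $X'$. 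For each $z\in G$, pick a representative $y_z^*\in X^*\cong X'$ with $z=[y_z^*]$ in $H[X]^*=X^*/H[X]^\perp$, and apply Lemma~\ref{le:adj-restr} to $A=I-T_n$ and $X_0=Y_0=H[X]$ to get
\[
\|z-T_{n,0}^*z\|_{H[X]^*}\le \|y_z^*-T_n^*y_z^*\|_{X^*}=\|y_z^*-T_ny_z^*\|_{X'}.
\]
The right-hand side tends to $0$ as $n\to\infty$ by the argument of part (a) applied in the separable translation-invariant space $X'$, verifying \eqref{eq:DCAP} with $M^*=2$.

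The main obstacle I anticipate is the clean identification of $T_n^*$ on $X'$: one must justify under reflexivity that the abstract Banach adjoint of $C_{K_n}\colon X\to X$ corresponds, via the pairing $(f,g)\mapsto\frac{1}{2\pi}\int_{-\pi}^{\pi}fg\,d\theta$, to convolution with $\widetilde{K}_n$, and then combine this with Lemma~\ref{le:adj-restr} to descend to the quotient $H[X]^*=X^*/H[X]^\perp$. Everything else -- the norm bounds, the compactness of $T_{n,0}$, and the strong convergence $T_ng\to g$ in both $X$ and $X'$ -- follows routinely from Lemmas~\ref{le:continuity-translations} and~\ref{le:convolution}.
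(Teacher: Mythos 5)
Your proof is correct and follows essentially the same route as the paper: Fej\'er means $\mathbf{K}_n$ with $\|I-\mathbf{K}_n\|_{\mathcal{B}(X)}\le 2$ via Lemma~\ref{le:convolution}, strong convergence to the identity in the separable translation-invariant space for the BCAP, and Lemma~\ref{le:adj-restr} applied to $I-\mathbf{K}_n$ acting on the separable, translation-invariant $X'=X^*$ for the DCAP. The only cosmetic difference is that you re-derive the strong convergence of the Fej\'er means from the approximate-identity estimate, whereas the paper invokes Katznelson's theorem on homogeneous Banach spaces.
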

%%%----------------------------------------------------------------------------
\begin{proof}
(a) For $\theta \in [-\pi, \pi]$ and $n = 0, 1, 2, \dots$, let
\[
K_n\left(e^{i\theta}\right) 
:= 
\sum_{k = -n}^n \left(1 - \frac{|k|}{n + 1}\right) e^{i k \theta} 
= 
\frac1{n + 1}
\left(\frac{\sin\frac{(n + 1)\theta}{2}}{\sin\frac{\theta}2}\right)^2 ,
\]
be the $n$-th Fej\'er kernel, and let
\[
\mathbf{K}_n f
:= 
K_n *f,
\quad
f\in X.
\]
It is well known that $K_n\ge 0$, $\|K_n\|_{L^1} = 1$, and
%%%
\begin{equation}\label{eq:Fejer}
\left(\mathbf{ K}_n f\right)\left(e^{i\vartheta}\right) 
= 
\sum_{k = -n}^n 
\widehat{f}(k) \left(1 - \frac{|k|}{n + 1}\right) e^{i k \theta} ,
\end{equation}
%%%
where $\widehat{f}(k)$ is the $k$-th Fourier coefficient of $f$
(see, e.g., \cite[Ch.~I, Section~2.5]{K04}). It follows from
Lemma~\ref{le:convolution} that $\|\mathbf{K}_n\|_{X \to X} = 1$.
Hence 
\[
\|I - \mathbf{ K}_n\|_{\mathcal{B}(X)} 
\le 
1 + \|\mathbf{ K}_n\|_{\mathcal{B}(X)} = 2.
\]
It follows from Lemma~\ref{le:continuity-translations} that
a separable translation-invariant Banach function space $X$
is a homogeneous Banach space in the sense of
\cite[Ch.~I, Definition~2.10]{K04}. Hence
\cite[Ch.~I, Theorem~2.11]{K04} implies that $\mathbf{ K}_n$ 
converge strongly to the identity operator on $X$ as 
$n \to \infty$. Moreover, \eqref{eq:Fejer} implies that
$\mathbf{K}_n$ maps $H[X]$ to $H[X]$. Thus $M(H[X]) \le 2$.

(b) If $X$ is reflexive, then $X^* = X'$ is also separable 
(see \cite[Ch. 1, Corollaries~4.3-4.4 and 5.6]{BS88}) and 
translation-invariant (see Lemma~\ref{le:TI-X-and-Xprime}).
It follows from the above that the adjoint operators 
$\mathbf{ K}_n^* = \mathbf{ K}_n : X' \to X'$ converge strongly to the 
identity operator as $n \to \infty$. Applying Lemma~\ref{le:adj-restr} to 
$A = I - \mathbf{ K}_n$, $X_0 = Y_0 = H[X]$, one concludes that the adjoint 
operators $\mathbf{ K}_n^* : (H[X])^* \to (H[X])^*$ also converge strongly 
to the identity operator as $n \to \infty$. Hence $M^*(H[X]) \le 2$.
\end{proof}
%%%----------------------------------------------------------------------------
\section{Proofs of Theorems \ref{th:BCAP-DCAP-weighted-Hardy} and \ref{th:BCAP-DCAP-weighted-abstract-Hardy}}
\label{sec:BCAP-DCAP-weighted}
\subsection{BCAP and DCAP of isometrically isomorphic Banach spaces}
The next lemma follows immediately form the definitions of the BCAP and the 
DCAP.
%%%----------------------------------------------------------------------------
\begin{lemma}\label{le:BCAP-DCAP-isometry}
Let $E$ and $F$ be isometrically isomorphic Banach spaces. 
\begin{enumerate}
\item[{\rm(a)}]
The space $E$ has the BCAP if and only if $F$ has the BCAP. In this case 
\[
M(E)=M(F).
\]

\item[{\rm(b)}]
The space $E$ has the DCAP if and only if $F$ has the DCAP. In this case 
\[
M^*(E)=M^*(F).
\]
\end{enumerate}
\end{lemma}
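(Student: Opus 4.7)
The plan is to exploit an isometric isomorphism $U\colon E\to F$ to transport any ``good'' compact approximating operator on one space to the other, observing that compactness, operator norm, and (via the adjoint $U^*$) dual norms are all preserved under conjugation by $U$.

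For part (a), I would start by fixing an isometric isomorphism $U\colon E\to F$ and noting three elementary facts: conjugation $T\mapsto UTU^{-1}$ is a bijection between $\mathcal{B}(E)$ and $\mathcal{B}(F)$ that preserves operator norm, preserves compactness, and sends $I_E$ to $I_F$. Now suppose $E$ has the BCAP with constant $M$, and let $\varepsilon>0$ and a finite set $G\subset F$ be given. Set $G':=U^{-1}(G)\subset E$, which is again finite. By the BCAP of $E$ applied to $\varepsilon$ and $G'$, one obtains $T\in\mathcal{K}(E)$ with $\|I_E-T\|_{\mathcal{B}(E)}\le M$ and $\|x-Tx\|_E<\varepsilon$ for all $x\in G'$. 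The operator $S:=UTU^{-1}\in\mathcal{K}(F)$ then satisfies $\|I_F-S\|_{\mathcal{B}(F)}=\|I_E-T\|_{\mathcal{B}(E)}\le M$, and for each $y=Ux\in G$,
\[
\|y-Sy\|_F=\|U(x-Tx)\|_F=\|x-Tx\|_E<\varepsilon.
\]
This shows $F$ has the BCAP with $M(F)\le M(E)$. Swapping the roles of $E$ and $F$ via $U^{-1}$ gives the reverse inequality and hence $M(E)=M(F)$.

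For part (b), the strategy is the same, with the extra observation that the adjoint $U^*\colon F^*\to E^*$ is itself an isometric isomorphism (with inverse $(U^{-1})^*=(U^*)^{-1}$), and that $(UTU^{-1})^*=(U^{-1})^*T^*U^*=(U^*)^{-1}T^*U^*$. Assume $E$ has the DCAP with constant $M^*$. Given $\varepsilon>0$ and a finite $G\subset F^*$, set $G':=U^*(G)\subset E^*$, and apply the DCAP of $E$ to obtain $T\in\mathcal{K}(E)$ with $\|I_E-T\|_{\mathcal{B}(E)}\le M^*$ and $\|z-T^*z\|_{E^*}<\varepsilon$ for all $z\in G'$. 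Put $S:=UTU^{-1}\in\mathcal{K}(F)$; the norm estimate $\|I_F-S\|_{\mathcal{B}(F)}\le M^*$ is immediate, and for any $w\in G$, writing $z:=U^*w\in G'$,
\[
w-S^*w=(U^*)^{-1}U^*w-(U^*)^{-1}T^*U^*w=(U^*)^{-1}(z-T^*z),
\]
so $\|w-S^*w\|_{F^*}=\|z-T^*z\|_{E^*}<\varepsilon$ by the isometry of $(U^*)^{-1}$. This yields $M^*(F)\le M^*(E)$, and symmetry gives equality.

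There is no real obstacle here; the only point that requires a brief check is the identity $(UTU^{-1})^*=(U^*)^{-1}T^*U^*$ used in part (b), together with the fact that $U$ being an isometric isomorphism forces $U^*$ to be one as well. Everything else is a direct translation of the definitions in \eqref{eq:BCAP} and \eqref{eq:DCAP} across the isomorphism, so I would keep the write-up short and symmetric between the two parts.
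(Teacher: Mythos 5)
Your proof is correct and is exactly the transport-by-conjugation argument that the paper has in mind when it states that the lemma ``follows immediately from the definitions'' of the BCAP and the DCAP (the paper gives no further details). The two points you flag for checking, namely that $U^*$ is an isometric isomorphism with $(U^*)^{-1}=(U^{-1})^*$ and that $(UTU^{-1})^*=(U^{-1})^*T^*U^*$, are handled correctly, so nothing is missing.
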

%%%----------------------------------------------------------------------------
\subsection{Isometric isomorphism of weighted and nonweighted abstract Hardy spaces}
Having in mind the previous lemma, we show that $H[X]$ and $H[X(w)]$
are isometrically isomorphic under natural assumptions on weights $w$.
%%%----------------------------------------------------------------------------
\begin{lemma}\label{le:Hardy-isometric-isomorphism} 
Let $X$ be a Banach function space with the associate space $X'$ and let
$w$ be a weight such that $w\in X$ and $1/w\in X'$. Then $H[X(w)]$ is 
isometrically isomorphic to $H[X]$. 
\end{lemma}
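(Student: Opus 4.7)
The plan is to realise the isometric isomorphism as multiplication by an outer function whose modulus on $\mathbb{T}$ equals $w$. First, I extract the integrability of $\log w$ from the hypotheses. By axiom (A5) applied to $X$, we have $X\hookrightarrow L^1$, so $w\in X$ gives $w\in L^1$; applying (A5) to the Banach function space $X'$ also yields $1/w\in L^1$. Splitting $\mathbb{T}$ according to whether $w\ge 1$ or $w<1$ and using the elementary inequalities $\log t\le t$ for $t\ge 1$ and $\log(1/t)\le 1/t$ for $0<t\le 1$, I deduce that $\log w\in L^1$. I can then form the classical outer function
\[
W(z):=\exp\left(\frac{1}{2\pi}\int_{-\pi}^\pi \frac{e^{i\theta}+z}{e^{i\theta}-z}\,\log w(e^{i\theta})\, d\theta\right),\quad z\in\mathbb{D},
\]
which is analytic and zero-free in $\mathbb{D}$, belongs to the Smirnov class $N^+$, and whose non-tangential boundary values satisfy $|W(e^{i\theta})|=w(e^{i\theta})$ a.e.\ on $\mathbb{T}$.

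The candidate isomorphism is the multiplication operator $\Phi f:=fW$, acting on boundary functions. Because $|fW|=|f|\,|W|=|f|w=|fw|$ a.e.\ on $\mathbb{T}$, the lattice property of the Banach function norm yields
\[
\|\Phi f\|_X = \|fW\|_X = \|fw\|_X = \|f\|_{X(w)},
\]
so $\Phi$ isometrically embeds $X(w)$ into $X$. Applying the same reasoning to $1/W$, whose boundary modulus is $1/w$, the map $g\mapsto g/W$ isometrically embeds $X$ into $X(w)$; since $W$ has no zeros on $\mathbb{T}$, these two maps are mutually inverse, and $\Phi\colon X(w)\to X$ is a surjective isometry.

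It remains to verify that $\Phi$ restricts to a bijection between the Hardy subspaces $H[X(w)]$ and $H[X]$; this is the main obstacle of the proof. Given $f\in H[X(w)]\subset L^1$ with $\widehat{f}(n)=0$ for all $n<0$, the function $f$ is the non-tangential boundary value of some $F\in H^1(\mathbb{D})$. Since $F,W\in N^+$, the product $FW$ is analytic in $\mathbb{D}$, lies in $N^+$, and has boundary value $fW\in L^1$ (using $X\hookrightarrow L^1$). Smirnov's theorem on Smirnov-class functions with integrable boundary values then forces $FW\in H^1$, so $\widehat{fW}(n)=0$ for all $n<0$, i.e.\ $\Phi f\in H[X]$. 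The reverse inclusion follows symmetrically, using that $W$ is zero-free in $\mathbb{D}$ and that $1/W$ is itself outer (hence in $N^+$). Everything outside this last step is a direct calculation with lattice properties of Banach function norms; the essential ingredient is outer-function theory together with Smirnov's theorem, which is what guarantees that multiplication by $W$ cannot create negative-index Fourier coefficients.
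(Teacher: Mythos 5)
Your proposal is correct and takes essentially the same route as the paper: construct the outer function $W$ with $|W|=w$ from $\log w\in L^1$ (itself obtained from $w\in X\subseteq L^1$ and $1/w\in X'\subseteq L^1$ via Axiom (A5)), use the lattice property to see that multiplication by $W$ is an isometry, and invoke Smirnov-class theory to show analyticity is preserved. The only cosmetic difference is that the paper reaches the Smirnov class by first placing $WF$ in $H^{1/2}(\mathbb{D})$ via H\"older's inequality before applying the same boundary-integrability theorem, whereas you use closure of $N^+$ under products directly.
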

%%%----------------------------------------------------------------------------
\begin{proof}
Let $\mathbb{D}$ be the unit disc: 
$\mathbb{D} :=\left\{z \in \mathbb{C} : \ |z| < 1\right\}$. A function $F$ 
analytic in $\mathbb{D}$ is said to belong to the Hardy
space $H^p(\mathbb{D})$, $0<p\le\infty$, if the integral mean
%%%
\begin{align*}
&
M_p(r,F)
=
\left(\frac{1}{2\pi}\int_{-\pi}^\pi |F(re^{i\theta})|^p\,d\theta\right)^{1/p},
\quad
0<p<\infty,
\\
&
M_\infty(r,F)=\max_{-\pi\le\theta\le\pi}|F(re^{i\theta})|,
\end{align*}
%%%
remains bounded as $r\to 1$. If $F\in H^p(\mathbb{D})$, $0<p\le\infty$, then
the nontangential limit $F(e^{i\theta})$ exists almost everywhere on 
$\mathbb{T}$ and $F \in L^p(\mathbb{T})$ (see, e.g., \cite[Theorem 2.2]{D70}). 
If $1 \le p \le \infty$, then $F \in H^p$ (see, e.g., 
\cite[Theorem~3.4]{D70}). 

It follows from $w\in X$, $1/w\in X'$ and Axiom (A5) that $w \in L^1$, 
$\frac{1}{w} \in L^1$. Then $\log w  \in L^1$.
Consider the outer function
%%%
\[
W(z) := \exp\left(\frac1{2\pi}\int_{-\pi}^\pi
\frac{e^{it} + z}{e^{it} - z}\, \log w(e^{it})\, dt\right), 
\quad 
z \in \mathbb{D}
\]
%%%
(see \cite[Ch. 5]{H88}). It belongs to $H^1(\mathbb{D})$ and $|W| = w$ a.e. 
on $\mathbb{T}$.

It follows from the definition of $X(w)$ that
%%%
\begin{equation}\label{Weight}
\|Wf\|_X = \|wf\|_X = \|f\|_{X(w)} \quad\mbox{for all}\quad f \in H[X(w)] .
\end{equation}
%%%
Since $X(w)$ is a Banach function space, Axiom (A5) implies that 
$X(w) \subseteq L^1$ and $H[X(w)] \subseteq H^1$.
Take any $f \in H[X(w)] $. Let $F \in H^1(\mathbb{D})$ be its analytic 
extensions to the unit disk $\mathbb{D}$ by means of the Poisson integral 
(see the proof of \cite[Theorem~3.4]{D70}).  
Since $W, F \subseteq H^1(\mathbb{D})$,  H\"older's inequality implies that
$WF \in H^{1/2}(\mathbb{D})$. It follows from \eqref{Weight} and Axiom (A5) 
that $Wf \in X \subseteq L^1$.
Hence $WF \in H^1(\mathbb{D})$ (see \cite[Theorem~2.11]{D70}). So, 
$Wf \in H^1\cap X = H[X]$. This proves that
the mapping $f \mapsto Wf$ is an isometric isomorphism of $H[X(w)]$ into 
$H[X]$. 

Repeating the above argument, one gets that the mapping 
$g \mapsto \frac1W\, g$ is an isometric isomorphism of $H[X]$ into $H[X(w)]$. 
Hence $H[X(w)]$ and $H[X]$ are isometrically isomorphic.
\end{proof}
%%%%-----------------------------------------------------------------------------
\subsection{Proof of Theorem~\ref{th:BCAP-DCAP-weighted-Hardy}}
By Lemma~\ref{le:Hardy-isometric-isomorphism}, the spaces $H^p$ and $H^p(w)$
are isometrically isomorphic. Therefore, in view of 
\eqref{eq:Shargorodsky-estimates} and Lemma~\ref{le:BCAP-DCAP-isometry}, 
the weighted Hardy space has the BCAP and the DCAP and 
\[
M(H^p(w))=M(H^p)\le 2^{|1-2/p|},
\quad
M^*(H^p(w))=M^*(H^p)\le 2^{|1-2/p|},
\]
which completes the proof.
\qed
%%%----------------------------------------------------------------------------
\subsection{Proof of Theorem~\ref{th:BCAP-DCAP-weighted-abstract-Hardy}}
It follows from Lemma~\ref{le:Hardy-isometric-isomorphism} that the spaces 
$H[X]$ and $H[X(w)]$ are isometrically isomorphic. Now part (a) 
(resp., part (b)) follows from part (a) (resp., part (b))
of Lemma~\ref{le:BCAP-DCAP-isometry} and part (a) (resp., part (b)) 
of Theorem~\ref{th:BCAP-DCAP-abstract-Hardy}. 
\qed
%%%----------------------------------------------------------------------------
\section{Concluding remarks and open problems}\label{sec:question}
\subsection{Exact values of the norms of the operators
$I-\mathbf{K}_n$ and $I-\mathbf{P}_r$ on $L^p$ and $H^p$}
Upper estimates for the norms of the operators $I-\mathbf{K}_n$ play a crucial
role in the proof of estimates \eqref{eq:Shargorodsky-estimates}
(see \cite{S21}). Consider also the operators $I-\mathbf{P}_r$, where
\[
\mathbf{P}_r f := P_r*f, \quad 0\le r<1,
\]
and $P_r$ is the Poisson kernel
\[
P_r(e^{i\theta}) 
:= 
\sum_{k = -\infty}^\infty r^{|k|} e^{ik\theta} 
= 
\frac{1 - r^2}{1 + r^2 - 2r\cos\theta}, 
\quad 
\theta \in [-\pi, \pi], 
\quad 
0 \le r < 1.
\]
The following theorem provides a two-sided estimate for 
operators of this type.
%%%----------------------------------------------------------------------------
\begin{theorem}\label{th:two-sided}
Let $K \in L^1$, $\|K\|_{L^1} = 1$, $K\ge 0$, and $\widehat{K}(n) \ge 0$ for 
all $n \in \mathbb{Z}$. Then the following estimate holds for the convolution 
operator $C_K$ defined by \eqref{eq:convolution-0}
%%%
\begin{equation}\label{eq:two-sided}
C_p \le \|I - C_K\|_{\mathcal{B}(L^p)}\le 2^{|1-2/p|}, \quad 1 \le p \le \infty,
\end{equation}
%%%
where $C_1=2 =C_\infty$ and $C_p$ is given by \eqref{eq:Franchetti} for 
$p \in (1, \infty)$.
\end{theorem}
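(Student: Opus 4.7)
The plan is to prove the upper and lower bounds of Theorem~\ref{th:two-sided} independently. For the upper bound $\|I-C_K\|_{\mathcal{B}(L^p)} \le 2^{|1-2/p|}$, I would use Riesz--Thorin interpolation anchored at the three endpoints $p \in \{1, 2, \infty\}$. At $p = 2$, Parseval identifies $C_K$ with the Fourier multiplier with symbol $\{\widehat{K}(n)\}_{n\in\mathbb{Z}}$, and the joint hypotheses $K \ge 0$ and $\widehat{K}(n) \ge 0$, combined with $\widehat{K}(0) = \|K\|_{L^1} = 1$, force $0 \le \widehat{K}(n) \le 1$; hence $\|I-C_K\|_{\mathcal{B}(L^2)} \le 1 = 2^{|1-2/2|}$. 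At $p \in \{1, \infty\}$, Lemma~\ref{le:convolution} gives $\|C_K\|_{\mathcal{B}(L^p)} = \|K\|_{L^1} = 1$, so the triangle inequality yields $\|I-C_K\|_{\mathcal{B}(L^p)} \le 2$. Interpolating between $(L^1, L^2)$ and between $(L^2, L^\infty)$ then produces $2^{|1-2/p|}$ for every $p \in [1, \infty]$.

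For the lower bound $\|I-C_K\|_{\mathcal{B}(L^p)} \ge C_p$, the key device is a \emph{dilation}. Given a trigonometric polynomial $f$, I would set $f_N(e^{i\theta}) := f(e^{iN\theta})$ for $N \in \mathbb{N}$. A substitution $\varphi = N\theta$ shows $\|f_N\|_{L^p} = \|f\|_{L^p}$, and the Fourier coefficients satisfy $\widehat{f_N}(Nk) = \widehat{f}(k)$ while $\widehat{f_N}(m) = 0$ otherwise. Consequently
\[
(I - C_K) f_N \ = \ \sum_{k}\bigl(1 - \widehat{K}(Nk)\bigr)\widehat{f}(k)\, e^{iNk\theta}.
\]
Because $\widehat{K}(0) = 1$ kills the $k = 0$ term and the Riemann--Lebesgue lemma (applicable since $K \in L^1$) gives $\widehat{K}(Nk) \to 0$ as $N \to \infty$ for each fixed $k \ne 0$, and since the sum is finite for a trigonometric polynomial,
\[
\bigl\|(I-C_K) f_N - (f_N - \widehat{f}(0))\bigr\|_{L^p} \ \le \ \sum_{k \ne 0} \bigl|\widehat{K}(Nk)\widehat{f}(k)\bigr| \ \longrightarrow \ 0.
\]
Combined with $\|f_N - \widehat{f}(0)\|_{L^p} = \|f - \widehat{f}(0)\|_{L^p}$ (another substitution), this yields $\|(I-C_K)f_N\|_{L^p} \to \|f - \widehat{f}(0)\|_{L^p}$, and hence
\[
\|I - C_K\|_{\mathcal{B}(L^p)} \ \ge \ \frac{\|f - \widehat{f}(0)\|_{L^p}}{\|f\|_{L^p}}
\]
for every trigonometric polynomial $f$.

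Taking the supremum over trigonometric polynomials, the right-hand side reaches the operator norm of $f \mapsto f - \int_{\mathbb{T}} f\, dm$ on $L^p$, which by definition (via the Franchetti-type characterisation recalled after \eqref{eq:Franchetti}) equals $C_p$. For $1 \le p < \infty$, density of trigonometric polynomials in $L^p$ makes this transition immediate. For $p = \infty$ density fails, but I would circumvent this by passing to the Banach adjoint: $(I-C_K)^* = I - C_{K^{\#}}$ on $L^\infty$, where $K^{\#}(e^{i\theta}) := K(e^{-i\theta})$ still meets the hypotheses (because $K$ is real, so $\widehat{K^{\#}}(n) = \widehat{K}(-n) = \widehat{K}(n) \ge 0$), which reduces the $L^\infty$ lower bound to the already-established $L^1$ one, namely $C_1 = 2 = C_\infty$. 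I expect the main subtlety to lie precisely at this $p = \infty$ endpoint, where density of trigonometric polynomials is lost; the dilation argument itself is routine once Riemann--Lebesgue is invoked on the finite frequency support of $\widehat{f}$.
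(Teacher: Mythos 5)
Your proof is correct. The upper bound is obtained exactly as in the paper: $\|C_K\|_{\mathcal{B}(L^p)}=1$ from Lemma~\ref{le:convolution} at the endpoints $p=1,\infty$, the multiplier bound $0\le\widehat K(n)\le 1$ at $p=2$, and Riesz--Thorin in between. Your lower bound, however, takes a genuinely different route. The paper first shows that $C_K$ is compact (a norm limit of finite-rank convolutions) and preserves constants, and then invokes the external result \cite[Theorem~3.4]{SS23} to get $\|I-C_K\|_{\mathcal{B}(L^p)}\ge C_p$ for $1\le p<\infty$; at $p=\infty$ (and $p=1$) it appeals to the Daugavet property of $L^\infty$ and $L^1$ to get the exact value $2$. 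You instead run a self-contained dilation argument: since $\widehat K(0)=\|K\|_{L^1}=1$ and $\widehat K(Nk)\to 0$ by Riemann--Lebesgue on the finite frequency support of a trigonometric polynomial $f$, the functions $(I-C_K)f_N$ converge in norm to $(I-\mathbf{K}_0)f_N$, while $\|f_N\|_{L^p}$ and $\|(I-\mathbf{K}_0)f_N\|_{L^p}$ do not depend on $N$; taking the supremum over trigonometric polynomials (dense in $L^p$ for $p<\infty$) recovers $C_p=\|I-\mathbf{K}_0\|_{\mathcal{B}(L^p)}$, and the $p=\infty$ case follows by duality from $p=1$ because $K^{\#}$ satisfies the same hypotheses and $(C_{K^{\#}})^*=C_K$ on $L^\infty=(L^1)^*$. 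This buys independence from \cite{SS23} and from the Daugavet property, and it does not even use compactness of $C_K$ (nor, for the lower bound, the positivity of the Fourier coefficients); the price is the need to justify the passage from trigonometric polynomials to all of $L^p$, which you handle correctly by density for $p<\infty$ and by adjoints for $p=\infty$. In effect you have given a direct proof of the special case of \cite[Theorem~3.4]{SS23} needed here, so both arguments are sound and yours is the more elementary.
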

%%%----------------------------------------------------------------------------
\begin{proof}
It follows from Lemma~\ref{le:convolution} that 
$\|C_K\|_{\mathcal{B}(L^p)} = 1$, and  hence
\[
\|I-C_K\|_{\mathcal{B}(L^1)}\le 2,
\quad
\|I-C_K\|_{\mathcal{B}(L^\infty)} \le 2
\]
(cf. the proof of Theorem~\ref{th:BCAP-DCAP-abstract-Hardy}). Since
$\widehat{K}(n) \ge 0$ and $\widehat{K}(n) \le \|K\|_{L^1} = 1$, 
$n \in \mathbb{Z}$, the Parseval theorem gives 
$\|I-C_K\|_{\mathcal{B}(L^2)} \le 1$. (In fact, one can easily see that 
$\|I-C_K\|_{\mathcal{B}(L^2)} = 1$, since $\widehat{K}(n) \to 0$ as 
$n \to \infty$ due the to Riemann-Lebesgue lemma.)
Then the Riesz-Thorin interpolation theorem implies that
%%%
\begin{equation}\label{eq:identity-minus-Fejer}
\|I-C_K\|_{\mathcal{B}(L^p)}\le 2^{|1-2/p|},
\quad
1<p<\infty,
\end{equation}
%%%
which proves the upper estimate in \eqref{eq:two-sided}.

Since trigonometric polynomials are dense in $L^1$, it follows from 
Lemma \ref{le:convolution} that $C_K$ can be approximated in norm by 
finite rank operators. So, $C_K : L^p \to L^p$ is a compact operator. 
The equality
\[
\frac{1}{2\pi}
\int_{-\pi}^\pi K(e^{i(\varphi-\theta)})\cdot \mathbbm{1}\,d\theta 
= 
\|K\|_{L^1} = 1
\]
implies that $C_K$ preserves constant functions. Then 
\[
\|I-C_K\|_{\mathcal{B}(L^p)}\ge C_p,
\quad
1\le p<\infty,
\]
(see \cite[Theorem~3.4]{SS23}). 

It is left to prove the lower estimate in \eqref{eq:two-sided} for 
$p = \infty$. In the case $p = \infty$ or $p = 1$, \eqref{eq:two-sided} turns 
into the equality $\|I-C_K\|_{\mathcal{B}(L^p)} = 2$, which 
follows from Lemma~\ref{le:convolution} and
the fact that $L^\infty$ and $L^1$ have the Daugavet property 
(see \cite[Theorem~1 and the references therein]{A91} and 
\cite[Corollary~6 and its proof]{W96}):
$\|I - T\|_{\mathcal{B}(L^p)} = 1 + \|T\|_{\mathcal{B}(L^p)}$ 
for every operator $T \in \mathcal{K}(L^p)$, $p = \infty$ or $p=1$.
\end{proof}
%%%----------------------------------------------------------------------------
It is easy to see that $\mathbf{K}_n$ and $\mathbf{P}_r$ satisfy the 
conditions of Theorem~\ref{th:two-sided} and map $H^p$ into itself. Clearly,
\[
\|I-\mathbf{K}_n\|_{\mathcal{B}(H^p)}
\le
\|I-\mathbf{K}_n\|_{\mathcal{B}(L^p)}, 
\ 
\|I-\mathbf{P}_r\|_{\mathcal{B}(H^p)}
\le
\|I-\mathbf{P}_r\|_{\mathcal{B}(L^p)},
\
1\le p\le\infty.
\]
The above remarks lead to the following.
%%%----------------------------------------------------------------------------
\begin{problem}\label{Probl1}
Let $n\in\mathbb{Z}_+$ and $r \in [0, 1)$. Find the exact values of 
$\|I-\mathbf{K}_n\|_{\mathcal{B}(L^p)}$ and 
$\|I-\mathbf{P}_r\|_{\mathcal{B}(L^p)}$ for $1< p < \infty$, 
and of $\|I-\mathbf{K}_n\|_{\mathcal{B}(H^p)}$ and 
$\|I-\mathbf{P}_r\|_{\mathcal{B}(H^p)}$ for $1\le p \le \infty$.
\end{problem}
%%%----------------------------------------------------------------------------
It seems that the above problem is open even for $n=1$. For $n = 0$, one has 
$(I-\mathbf{K}_0)f = f - \widehat{f}(0) = (I-\mathbf{P}_0)f$ and
\[
\|I-\mathbf{K}_0\|_{\mathcal{B}(L^p)} = C_p
\]
(see \cite[formula (8)]{F90}), 
but the value of $\|I-\mathbf{K}_0\|_{\mathcal{B}(H^p)}$
does not seem to be known for $p \in [1, \infty)\setminus\{2\}$. What is known 
is that 
\begin{equation}\label{eq:norm-of-I-K0}
\|I-\mathbf{K}_0\|_{\mathcal{B}(H^\infty)} = 2 
\end{equation}
%%%
(see \cite[Theorem~2.5]{F17}) and
\[
\|I-\mathbf{K}_0\|_{\mathcal{B}(H^p)} < \|I-\mathbf{K}_0\|_{\mathcal{B}(L^p)} 
\]
for sufficiently small $p \ge 1$. Indeed, 
$\|I-\mathbf{K}_0\|_{\mathcal{B}(L^p)} = C_p \to 2$ as $p \to 1$, while 
\[
\|I-\mathbf{K}_0\|_{\mathcal{B}(H^p)} < 1.7047 
\]
for sufficiently small $p \ge 1$ (see the proof of \cite[Theorem~2.4]{F17}).

It follows from the lower estimate in \eqref{eq:two-sided} that
%%%
\begin{equation}\label{eq:lowern}
\|I-\mathbf{K}_n\|_{\mathcal{B}(L^p)} \ge \|I-\mathbf{K}_0\|_{\mathcal{B}(L^p)} .
\end{equation}
%%%
An analogue of this estimate holds in the $H^p$ setting.
%%%----------------------------------------------------------------------------
\begin{lemma}\label{le:n-dominates-0}
For every $n\in\mathbb{Z}_+$,
\[
\|I-\mathbf{K}_n\|_{\mathcal{B}(H^p)} \ge \|I-\mathbf{K}_0\|_{\mathcal{B}(H^p)}.
\]
\end{lemma}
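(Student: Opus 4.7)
The plan is to exhibit, for each $f\in H^p$, a test function $g\in H^p$ of the same norm on which $\mathbf{K}_n$ acts exactly like $\mathbf{K}_0$ acts on $f$. The right choice is the dilation
\[
g(e^{i\theta}) := f\bigl(e^{i(n+1)\theta}\bigr).
\]
First I would check that $g$ belongs to $H^p$ with $\|g\|_{H^p}=\|f\|_{H^p}$. The norm equality is just the change of variable $\varphi=(n+1)\theta$ together with the $2\pi$-periodicity of $f$; the fact that $g\in H^p$ (and not merely $L^p$) follows because the map $f\mapsto f(e^{i(n+1)\cdot})$ sends the Fourier mode $e^{ik\theta}$ to $e^{ik(n+1)\theta}$, so the Fourier coefficients $\widehat{g}(m)$ vanish unless $m$ is a nonnegative multiple of $n+1$; explicitly, $\widehat{g}(k(n+1))=\widehat{f}(k)$ for $k\ge 0$ and $\widehat{g}(m)=0$ otherwise.

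Next I would compute $\mathbf{K}_n g$ using the Fourier-side formula \eqref{eq:Fejer}. The frequencies $m$ with $|m|\le n$ that are multiples of $n+1$ reduce to the single value $m=0$, so
\[
(\mathbf{K}_n g)(e^{i\theta}) = \widehat{g}(0)\left(1-\tfrac{0}{n+1}\right) = \widehat{f}(0),
\]
and therefore $(I-\mathbf{K}_n)g = g - \widehat{g}(0)=(I-\mathbf{K}_0)g$. But by construction $(I-\mathbf{K}_0)g$ is itself the dilation of $(I-\mathbf{K}_0)f$, namely $((I-\mathbf{K}_0)g)(e^{i\theta})=((I-\mathbf{K}_0)f)(e^{i(n+1)\theta})$, so the norm-preservation of dilation gives
\[
\|(I-\mathbf{K}_n)g\|_{H^p} = \|(I-\mathbf{K}_0)f\|_{H^p}.
\]

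Combining these identities,
\[
\|I-\mathbf{K}_n\|_{\mathcal{B}(H^p)} \;\ge\; \frac{\|(I-\mathbf{K}_n)g\|_{H^p}}{\|g\|_{H^p}} \;=\; \frac{\|(I-\mathbf{K}_0)f\|_{H^p}}{\|f\|_{H^p}},
\]
and taking the supremum over $f\in H^p\setminus\{0\}$ yields the claim. There is no real obstacle here beyond spotting the dilation trick; the only step that needs minor care is verifying that the dilation preserves $H^p$ (rather than merely $L^p$), which I would settle either by the Fourier-coefficient computation above or, equivalently, by noting that if $F$ is the analytic extension of $f$ to $\mathbb{D}$, then $z\mapsto F(z^{n+1})$ is analytic in $\mathbb{D}$ and provides the analytic extension of $g$, with $M_p(r,F(\cdot^{n+1}))=M_p(r^{n+1},F)$ bounded in $r<1$.
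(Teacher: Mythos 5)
Your proof is correct and follows essentially the same route as the paper: the paper uses the dilation $f_m(e^{i\theta}):=f(e^{im\theta})$ with any $m>n$ (you take $m=n+1$), observes via the Fej\'er formula that $\mathbf{K}_n f_m=\widehat{f}(0)=\mathbf{K}_0 f_m$, and concludes by the norm-preservation of the dilation. Your additional verification that the dilation maps $H^p$ into $H^p$ is a point the paper handles by citing a reference, but the argument is the same.
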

%%%----------------------------------------------------------------------------
\begin{proof}
Take any $f \in H^p\setminus\{0\}$ and set 
$f_m(e^{i\theta}) := f(e^{im\theta})$, $m \in \mathbb{N}$. Then 
$f \in H^p$ and $\|f_m\|_{H^p} = \|f\|_{H^p}$ (see \cite[Theorem~5.5]{DG02}). 
Let $m>n$.
It follows from \eqref{eq:Fejer} that $\mathbf{K}_n f_m = \widehat{f}(0) = 
\mathbf{K}_0 f_m = \mathbf{K}_0 f$. Hence
%%% 
\begin{align*}
\|I-\mathbf{K}_n\|_{\mathcal{B}(H^p)} 
&= 
\sup_{g \in H^p\setminus\{0\}} 
\frac{\|(I-\mathbf{K}_n)g\|_{H^p}}{\|g\|_{H^p}}
\ge 
\sup_{f \in H^p\setminus\{0\}} 
\frac{\|(I-\mathbf{K}_n)f_m\|_{H^p}}{\|f_m\|_{H^p}} 
\\
& = 
\sup_{f \in H^p\setminus\{0\}} 
\frac{\|(I-\mathbf{K}_0)f_m\|_{H^p}}{\|f\|_{H^p}} 
= 
\sup_{f \in H^p\setminus\{0\}} 
\frac{\|(I-\mathbf{K}_0)f\|_{H^p}}{\|f\|_{H^p}} 
\\
&= 
\|I-\mathbf{K}_0\|_{\mathcal{B}(H^p)},
\end{align*}
%%%
which completes the proof.
\end{proof}
%%%----------------------------------------------------------------------------
The same argument as in the proof of Lemma~\ref{le:n-dominates-0}
applies in the $L^p$ setting and provides a simpler proof of \eqref{eq:lowern}.
%%%----------------------------------------------------------------------------
\subsection{Exact value of the norm of the backward shift operator on $H^p$}
We think that the question about the exact value of 
$\|I-\mathbf{K}_0\|_{\mathcal{B}(H^p)}$ is particularly interesting, and 
although it is a special case of Problem~\ref{Probl1}, we state it again 
below in terms of the backward shift operator 
\[
(\mathbf{B}f)(e^{i\theta}) 
:= 
e^{-i\theta} \left(f(e^{i\theta}) - \widehat{f}(0)\right)
= 
e^{-i\theta} \big((I-\mathbf{K}_0)f\big)(e^{i\theta}), \quad f \in H^p.
\] 
Clearly,
\begin{align*}
|\mathbf{B}f| = |(I-\mathbf{K}_0)f|  
\quad & \implies \quad 
\|\mathbf{B} f\|_{H^p} = \|(I-\mathbf{K}_0)f\|_{H^p} 
\ \mbox{ for all } \ f \in H^p 
\\
& \implies \quad  
\|\mathbf{ B}\|_{\mathcal{B}(H^p)} = \|I-\mathbf{K}_0\|_{\mathcal{B}(H^p)} .
\end{align*}
%%%
In particular,
\[
\|B\|_{\mathcal{B}(H^\infty)}=2
\]
(see \eqref{eq:norm-of-I-K0} and \cite[Theorem~2.5]{F17}).
%%%----------------------------------------------------------------------------
\begin{problem}\label{Probl2}
Let $1\le p < \infty$. Find the exact value of the norm 
$\|\mathbf{ B}\|_{\mathcal{B}(H^p)}$ of the backward shift operator.
\end{problem}
%%%----------------------------------------------------------------------------
\subsection{Exact values for $M(H^p)$ and $M^*(H^p)$}
It seems that estimates \eqref{eq:Shargorodsky-estimates} 
and the estimate $M(H^1)\le 2$, which follows from 
Theorem~\ref{th:BCAP-DCAP-weighted-abstract-Hardy}(a),
are all what is known about the values of $M(H^p)$ and $M^*(H^p)$. 
So, it would be interesting to get nontrivial lower and better upper 
bounds for $M(H^p)$ and $M^*(H^q)$ and, moreover, to solve the following.
%%%----------------------------------------------------------------------------
\begin{problem}
{\rm (a)}
Find the exact value of $M(H^p)$, $1\le p<\infty$.

{\rm (b)}
Find the exact value of $M^*(H^p)$, $1<p<\infty$.
\end{problem}
%%%----------------------------------------------------------------------------
Given that $M(L^p) = \|I-\mathbf{K}_0\|_{\mathcal{B}(L^p)}$ 
(see \cite[Theorem~3.2]{SS23}), it would be interesting to know whether
$M(H^p) = \|I-\mathbf{K}_0\|_{\mathcal{B}(H^p)}$.
%%%----------------------------------------------------------------------------
\subsection{Estimates for $M(H[L^\varphi])$ and $M^*(H[L^\varphi])$ in the 
case of some Orlicz spaces $L^\varphi$}
Let $\varphi:[0,\infty)\to[0,\infty]$ be a convex nondecreasing left-continuous 
function that is not identically zero or infinity on
$(0,\infty)$ and satisfies $\varphi(0)=0$. For a measurable function
$f:\mathbb{T}\to\mathbb{C}$, define
\[
I_\varphi(f):=\int_{\mathbb{T}}\varphi(|f(t)|)\,dm(t).
\]
The Orlicz space $L^\varphi$ is the set of all measurable functions 
$f:\mathbb{T}\to\mathbb{C}$ such that $I_\varphi(\lambda f)<\infty$ for
some $\lambda=\lambda(f)>0$. This space is a Banach space when equipped 
with either of the following two equivalent norms: the Luxemburg norm
\[
\|f\|_\varphi:=\inf\{\lambda>0:I_\varphi(f/\lambda)\le 1\}
\]
and the Orlicz norm (in the Amemiya form)
\[
\|f\|_\varphi^0:=\inf_{k>0}\frac{1}{k}(1+I_\varphi(kf)).
\]
It is well known that
\[
\|f\|_\varphi\le \|f\|_\varphi^0 \le 2\|f\|_\varphi
\quad\mbox{for all}\quad
f\in L^\varphi.
\]

We denote by $\mathcal{P}$ the set of all quasi-concave functions
$\rho:[0,\infty)\to[0,\infty)$, that is, the functions $\rho$ such that
$\rho(x)=0$ precisely when $x=0$, the function $\rho(x)$ is increasing
and the function $\rho(x)/x$ is decreasing on $(0,\infty)$. Let
$\widetilde{\mathcal{P}}$ denote the subset of all concave functions
in $\mathcal{P}$.

It follows from \cite[Lemma~3.2]{KM01} that if $1\le p<q\le\infty$ and 
$\rho\in\widetilde{\mathcal{P}}$, then the function $\varphi$,
inverse to the function $\varphi^{-1}$ defined by
%%%
\begin{equation}\label{eq:phi-inverse}
\varphi^{-1}(0):=0,
\quad
\varphi^{-1}(x):=x^{1/p}\rho\left(x^{1/q-1/p}\right),
\quad 
x\in(0,\infty),
\end{equation}
%%%
is convex. Moreover, if $1<p<q<\infty$, then $\varphi$ and its complementary
function $\varphi^*$ defined by
\[
\varphi^*(x):=\sup_{y>0}(xy-\varphi(y)),
\]
satisfy the $\Delta_2$-condition for all $x\ge 0$, that is,
there exist $K,K^*>0$ such that $\varphi(2x)\le K\varphi(x)$
and $\varphi^*(2x)\le K^*\varphi^*(x)$ for all $x\ge 0$. Then
$L^\varphi$ is reflexive (see, e.g., \cite[Corollary~15.4.2]{RGMP16}).

For $1<p,q<\infty$, put
\[
\gamma_{p,q}:=\inf\left\{\gamma>0:
\inf_{x+y=\gamma,\ x\ge 0,\ y\ge 0}(x^p+y^q)=1\right\}.
\]
It follows from \cite[Proposition~4.3]{KM01} that $\gamma_{p,q}$
continuously increases in $p$ and $q$. Moreover, if $p\le q$, then
\[
2^{1-1/p}\le\gamma_{p,q}\le 2^{1-1/q}.
\]
For $r\in(1,\infty)$, define $r'$ by $1/r+1/r'=1$. 
%%%----------------------------------------------------------------------------
\begin{theorem}[{\cite[Theorem~5.1]{KM01}}]
\label{th:Karlovich-Maligranda}
Let $1<p<q<\infty$ and $\rho\in\widetilde{\mathcal{P}}$. Suppose that
$\varphi^{-1}$ is defined by \eqref{eq:phi-inverse}. If $T\in\mathcal{B}(L^p)$
and $T\in\mathcal{B}(L^q)$, then $T\in\mathcal{B}(L^\varphi)$ and
\[
\|T\|_{\mathcal{B}(L^\varphi)}
\le C_{p,q}\max\left\{
\|T\|_{\mathcal{B}(L^p)},\|T\|_{\mathcal{B}(L^q)}
\right\},
\]
where $L^\varphi$ is equipped with the Luxemburg norm or with the Orlicz norm,
and
%%%
\begin{equation}\label{eq:Cpq}
1\le C_{p,q}
:=
\min\left\{
(2\gamma_{p,q})^{1/p},(2\gamma_{q',p'})^{1/q'}
\right\}
\le
2^{1/(pq')+\min\{1/p,1/q'\}}.
\end{equation}
\end{theorem}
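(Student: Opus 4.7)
The plan is to identify $L^\varphi$ with a Calder\'on--Lozanovsky interpolation space built from $L^p$ and $L^q$ via the concave function $\rho$, invoke the standard lattice interpolation theorem there, and transfer the result back to $L^\varphi$ with the constant $C_{p,q}$ coming out of the identification.

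I would first introduce the Calder\'on--Lozanovsky functor associated with $\rho$ and the couple $(L^p,L^q)$, obtaining a Banach lattice $\rho(L^p,L^q)$ that, by \eqref{eq:phi-inverse} and the positive homogeneity of $\rho$, coincides as a set with $L^\varphi$. The heart of this first step is to exhibit an equivalence of norms between $\rho(L^p,L^q)$ and $L^\varphi$ with Banach--Mazur constant at most $(2\gamma_{p,q})^{1/p}$. This is accomplished via a level-set decomposition: for $f\in L^\varphi$ with $I_\varphi(|f|)\le 1$, split $f=f_1+f_2$ with $f_1:=f\chi_{\{|f|>\tau\}}$ and $f_2:=f\chi_{\{|f|\le\tau\}}$, choose $\tau$ optimally in the scalar variable appearing inside $\rho$, and reduce to the scalar variational problem whose value is $\gamma_{p,q}$ (the factor of $2$ absorbing the passage between modular and norm).

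With this identification in place, the classical lattice interpolation theorem for Calder\'on--Lozanovsky spaces immediately yields $\|T\|_{\mathcal{B}(\rho(L^p,L^q))}\le\max\{\|T\|_{\mathcal{B}(L^p)},\|T\|_{\mathcal{B}(L^q)}\}$, and multiplying by the Banach--Mazur constant gives the bound $\|T\|_{\mathcal{B}(L^\varphi)}\le (2\gamma_{p,q})^{1/p}M$, where $M$ denotes the right-hand side maximum. For the companion bound $(2\gamma_{q',p'})^{1/q'}M$, I would dualise: the associate space $(L^\varphi)'$ is again an Orlicz space whose inverse-Young function has a Calder\'on--Lozanovsky representation with $(p,q)$ replaced by $(q',p')$ and $\rho$ replaced by a concave function $\widetilde{\rho}$ built from $\rho$. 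Since $\|T\|_{\mathcal{B}(L^\varphi)}=\|T^*\|_{\mathcal{B}((L^\varphi)')}$ and $\|T^*\|_{\mathcal{B}(L^{r'})}=\|T\|_{\mathcal{B}(L^r)}$, applying the first half of the argument to $T^*$ on $(L^\varphi)'$ delivers the second estimate, and the final universal bound $C_{p,q}\le 2^{1/(pq')+\min\{1/p,1/q'\}}$ follows from the inequality $\gamma_{p,q}\le 2^{1-1/q}$ recorded just before the statement.

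The main obstacle will be sharp control of the Banach--Mazur constant in the identification of $L^\varphi$ with $\rho(L^p,L^q)$: one must show that the level-set decomposition is essentially optimal so that no slack beyond $(2\gamma_{p,q})^{1/p}$ is introduced when converting between the modular $I_\varphi$ and the Luxemburg or Orlicz norm. Concavity of $\rho$ is precisely what allows the pointwise optimisation at each level of $|f|$ to aggregate, under integration, into the scalar variational problem defining $\gamma_{p,q}$; without it the sharp constant would inflate. The duality step, though conceptually routine, also requires a careful verification that $\varphi^*$ admits a Calder\'on--Lozanovsky representation of the same form with parameters $(q',p')$ and a new concave $\widetilde{\rho}$, so that the argument reruns verbatim on the dual side.
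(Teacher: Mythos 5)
First, note that the paper does not prove this statement at all: it is quoted verbatim from \cite[Theorem~5.1]{KM01}, so there is no in-paper argument to compare yours against. Judged on its own merits, your outline does follow the strategy of the cited source -- identify $L^\varphi$ with a Calder\'on--Lozanovsky-type construction over the couple $(L^p,L^q)$ via the homogeneous extension of $\rho$, extract the constant from the scalar variational problem defining $\gamma_{p,q}$ through a level-set decomposition, and obtain the second constant $(2\gamma_{q',p'})^{1/q'}$ by duality. The closing numerical step is also sound: $(2\gamma_{p,q})^{1/p}\le 2^{1/(pq')+1/p}$ and $(2\gamma_{q',p'})^{1/q'}\le 2^{1/(pq')+1/q'}$ do follow from $\gamma_{p,q}\le 2^{1-1/q}$ (applied with $(q',p')$ for the second bound, using $q'<p'$).

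However, as a proof the proposal has genuine gaps precisely where the quantitative content lives. (i) You invoke ``the classical lattice interpolation theorem for Calder\'on--Lozanovsky spaces'' to get $\|T\|_{\mathcal{B}(\rho(L^p,L^q))}\le\max\{\|T\|_{\mathcal{B}(L^p)},\|T\|_{\mathcal{B}(L^q)}\}$, but the exact-constant interpolation property of $\rho(X_0,X_1)$ is a theorem about \emph{positive} operators: for general linear $T$ the pointwise estimate $|Tf|\le\lambda\,\rho(|Tf_0|,|Tf_1|)$ underlying that theorem fails. What actually works is to decompose $f=f_0+f_1$, apply $T$ termwise, and recombine $Tf_0+Tf_1$ in $L^\varphi$ using the triangle inequality; the constants $2$ and $\gamma_{p,q}$ arise exactly from this decomposition--recomposition, not from a black-box interpolation theorem plus a Banach--Mazur factor. (ii) The duality step is not ``verbatim'': the complementary function satisfies only $x\le\varphi^{-1}(x)(\varphi^*)^{-1}(x)\le 2x$, and the natural candidate $\widetilde\rho(t)=t/\rho(t)$ is quasi-concave but need not be concave, so placing $(\varphi^*)^{-1}$ exactly in the form \eqref{eq:phi-inverse} with a function in $\widetilde{\mathcal P}$ (and without an extra factor of $2$ that would destroy the claimed constant $(2\gamma_{q',p'})^{1/q'}$) requires a separate argument; one also needs the isometric identification of $(L^\varphi)^*$ with $L^{\varphi^*}$ under the complementary (Orlicz vs.\ Luxemburg) norm, which is where the hypothesis that both norms are allowed in the statement comes from. (iii) The central quantitative claim -- that the identification of $L^\varphi$ with the interpolation construction costs at most $(2\gamma_{p,q})^{1/p}$ -- is only sketched, and you yourself flag it as the main obstacle. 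So the route is the right one, but the proposal is a plan whose three key estimates are asserted rather than established.
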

%%%----------------------------------------------------------------------------
Using this interpolation theorem, we can refine the results of
Theorem \ref{th:BCAP-DCAP-abstract-Hardy}(b) for some Orlicz spaces.
%%%----------------------------------------------------------------------------
\begin{theorem}
Let $1<p<q<\infty$ and $\rho\in\widetilde{\mathcal{P}}$. Suppose that
$\varphi^{-1}$ is defined by \eqref{eq:phi-inverse} and the corresponding
Orlicz space $L^\varphi$ is equipped with the Luxemburg norm or with
the Orlicz norm. Then the Hardy-Orlicz space $H[L^\varphi]$ has the
BCAP and the DCAP with 
\[
M(H[L^\varphi])\le \min\{2,\Lambda_{p,q}\},
\quad
M^*(H[L^\varphi])\le \min\{2,\Lambda_{p,q}\},
\]
where
\begin{equation}\label{eq:Lambda-pq}
\Lambda_{p,q}:=C_{p,q}\max\left\{2^{|1-2/p|},2^{|1-2/q|}\right\},
\end{equation}
and the constant $C_{p,q}$ is defined by \eqref{eq:Cpq}.
\end{theorem}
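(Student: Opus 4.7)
The plan is to re-run the Fej\'er approximation argument from the proof of Theorem~\ref{th:BCAP-DCAP-abstract-Hardy}(b), but to replace the trivial bound $\|I-\mathbf{K}_n\|_{\mathcal{B}(L^\varphi)}\le 2$ by a sharper one obtained from the Karlovich--Maligranda interpolation Theorem~\ref{th:Karlovich-Maligranda}. Under the hypotheses, $L^\varphi$ (with either the Luxemburg or the Orlicz norm) is a reflexive Banach function space by the remarks preceding the theorem, and since Orlicz spaces are rearrangement-invariant it is also translation-invariant. Therefore Theorem~\ref{th:BCAP-DCAP-abstract-Hardy}(b) already yields $M(H[L^\varphi])\le 2$ and $M^*(H[L^\varphi])\le 2$, and it remains to establish the competing bound $\Lambda_{p,q}$, after which the $\min$ in the statement is automatic.

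For the BCAP estimate, I would take the Fej\'er convolution operators $\mathbf{K}_n$ from the proof of Theorem~\ref{th:BCAP-DCAP-abstract-Hardy}. Each $\mathbf{K}_n$ has finite-dimensional range (contained in the trigonometric polynomials of degree at most $n$), is therefore compact on $L^\varphi$, and maps $H[L^\varphi]$ into itself by~\eqref{eq:Fejer}. Applying~\eqref{eq:identity-minus-Fejer} with the exponents $p$ and $q$ and then invoking Theorem~\ref{th:Karlovich-Maligranda} with $T=I-\mathbf{K}_n$ gives
\[
\|I-\mathbf{K}_n\|_{\mathcal{B}(L^\varphi)}
\le
C_{p,q}\max\{2^{|1-2/p|},2^{|1-2/q|}\}
=\Lambda_{p,q},
\]
and the same bound for the restriction of $I-\mathbf{K}_n$ to $H[L^\varphi]$. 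Separability of $L^\varphi$, together with Lemma~\ref{le:continuity-translations} and the argument already used for Theorem~\ref{th:BCAP-DCAP-abstract-Hardy}(a), yields $\mathbf{K}_n\to I$ strongly on $L^\varphi$, hence on $H[L^\varphi]$, so $M(H[L^\varphi])\le\Lambda_{p,q}$.

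For the DCAP estimate I would dualize as in the proof of Theorem~\ref{th:BCAP-DCAP-abstract-Hardy}(b). Reflexivity transfers to $(L^\varphi)^*=(L^\varphi)'$, which is separable and, by Lemma~\ref{le:TI-X-and-Xprime}, translation-invariant; hence $\mathbf{K}_n^*=\mathbf{K}_n$ converges strongly to the identity on $(L^\varphi)^*$, with $\|I-\mathbf{K}_n^*\|_{\mathcal{B}((L^\varphi)^*)}=\|I-\mathbf{K}_n\|_{\mathcal{B}(L^\varphi)}\le\Lambda_{p,q}$. Applying Lemma~\ref{le:adj-restr} with $A=I-\mathbf{K}_n$ and $X_0=Y_0=H[L^\varphi]$ then transfers the strong convergence of the adjoints to $H[L^\varphi]^*$, giving $M^*(H[L^\varphi])\le\Lambda_{p,q}$. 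There is no serious obstacle here: all analytical work is already in place in earlier sections. The one mild point to verify is that every ingredient (Lemma~\ref{le:convolution}, Lemma~\ref{le:continuity-translations}, Lemma~\ref{le:adj-restr}, and Theorem~\ref{th:Karlovich-Maligranda}) is insensitive to the choice between the Luxemburg and Orlicz norms, which is the case since the former three apply to any Banach function norm and the latter explicitly covers both options.
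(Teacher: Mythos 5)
Your proposal is correct and follows essentially the same route as the paper: first invoke Theorem~\ref{th:BCAP-DCAP-abstract-Hardy}(b) for the bound $2$, then bound $\|I-\mathbf{K}_n\|_{\mathcal{B}(H[L^\varphi])}\le\|I-\mathbf{K}_n\|_{\mathcal{B}(L^\varphi)}\le\Lambda_{p,q}$ via \eqref{eq:identity-minus-Fejer} and Theorem~\ref{th:Karlovich-Maligranda}, and conclude by the same strong-convergence and duality arguments as in the proof of Theorem~\ref{th:BCAP-DCAP-abstract-Hardy}. The only difference is that you spell out the details the paper compresses into ``as in the proof of Theorem~\ref{th:BCAP-DCAP-abstract-Hardy}(b)''; there is nothing to correct.
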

%%%----------------------------------------------------------------------------
\begin{proof}
It is well-known and easy to check that each Orlicz space is 
translation-invariant. As it was mentioned above, $L^\varphi$ is reflexive under the assumptions 
of the Theorem. Therefore, by 
Theorem~\ref{th:BCAP-DCAP-abstract-Hardy}(b), the Hardy-Orlicz space
$H[L^\varphi]$ has the BCAP and the DCAP with $M(H[L^\varphi])\le 2$
and $M^*(H[L^\varphi])\le 2$. It remains to show that
%%%
\begin{equation}\label{eq:Hardy-Orlicz}
M(H[L^\varphi])\le\Lambda_{p,q},
\quad 
M^*(H[L^\varphi])\le\Lambda_{p,q}.
\end{equation}
%%%
It follows from \eqref{eq:Fejer}, \eqref{eq:identity-minus-Fejer}
and Theorem~\ref{th:Karlovich-Maligranda} that for all $n\in\mathbb{Z}_+$,
%%%
\begin{align*}
\|I-\mathbf{K}_n\|_{\mathcal{B}(H[L^\varphi])}
&\le 
\|I-\mathbf{K}_n\|_{\mathcal{B}(L^\varphi)}
\\
&\le 
C_{p,q}\max\left\{
\|I-\mathbf{K}_n\|_{\mathcal{B}(L^p)},
\|I-\mathbf{K}_n\|_{\mathcal{B}(L^q)}
\right\}
\\
&\le 
C_{p,q}\max\left\{
2^{|1-2/p|},2^{|1-2/q|}
\right\}
=
\Lambda_{p,q},
\end{align*}
%%%
where the Orlicz space $L^\varphi$ is equipped with the Luxemburg norm
or the Orlicz norm. As in the proof of 
Theorem~\ref{th:BCAP-DCAP-abstract-Hardy}(b), this implies 
\eqref{eq:Hardy-Orlicz}.
\end{proof}
%%%----------------------------------------------------------------------------
It follows from \eqref{eq:Cpq} and \eqref{eq:Lambda-pq} that if
$p$ and $q$ are sufficiently close to $2$, then $M(H[L^\varphi])<2$
and $M^*(H[L^\varphi])<2$. Given that the value of $M(H^p)$ is not known, 
it would perhaps be too ambitious to ask about the exact values of 
$M(H[L^\varphi])$ and $M^*(H[L^\varphi])$. Nevertheless, we think it would 
be interesting to get more information on these quantities. 
%%%----------------------------------------------------------------------------
\subsection{Estimates for $M(H[L^{p,q}])$ and $M^*(H[L^{p,q}])$ in the 
case of Lorentz spaces $L^{p,q}$}
The distribution function $m_f$ of a measurable a.e. finite function
$f:\mathbb{T}\to\mathbb{C}$ is given by
\[
m_f(\lambda) :=  m\{t\in\mathbb{T}:|f(t)|>\lambda\},\quad\lambda\ge 0.
\]
The non-increasing rearrangement of $f$ is
defined by
\[
f^*(x):=\inf\{\lambda:m_f(\lambda)\le x\},\quad x\ge 0.
\]
We refer to \cite[Ch.~2, Section~1]{BS88} for properties of distribution
functions and non-increasing rearrangements.

One of the closest classes of translation-invariant spaces to the class of 
Lebesgue spaces $L^p$, $1\le p\le\infty$ consists of the Lorentz 
spaces $L^{p,q}$ defined as follows.
For $1\le q\le p<\infty$, the Lorentz space $L^{p,q}$ consists of all
measurable functions $f:\mathbb{T}\to\mathbb{C}$ for which
\[
\|f\|_{p,q}:=\left(\int_0^1 [t^{1/p}f^*(t)]^q\frac{dt}{t}\right)^{1/q}<\infty.
\]
This is a rearrangement-invariant Banach function space with respect to
the norm $\|\cdot\|_{p,q}$ (see, e.g., \cite[Ch.~4, Theorem~4.3]{BS88}).
The Lorentz space $L^{p,p}$ is isometrically isomorphic to the Lebesgue 
space $L^p$.

It follows from Theorem~\ref{th:BCAP-DCAP-abstract-Hardy} that
if $1\le q\le p<\infty$, then the Hardy-Lorentz space $H[L^{p,q}]$ has the 
BCAP with 
\begin{equation}\label{eq:M-Hardy-Lorentz}
M(H[L^{p,q}])\le 2, 
\end{equation}
%%%
because $L^{p,q}$ is separable in this case. 
Moreover, if $1< q\le p<\infty$, then the Hardy-Lorentz space $H[L^{p,q}]$
has the DCAP with 
%%%
\begin{equation}\label{eq:M*-Hardy-Lorentz}
M^*(H[L^{p,q}])\le 2, 
\end{equation}
%%%
since the Lorentz space $L^{p,q}$ is reflexive in this case.

Having in mind estimates \eqref{eq:Shargorodsky-estimates}, which can
be stated as follows:
\[
M(H[L^{p,p}])\le 2^{|1-2/p|},
\quad
M^*(H[L^{p,p}])\le 2^{|1-2/p|},
\quad
1<p<\infty,
\] 
it seems natural to formulate the following.
%%%----------------------------------------------------------------------------
\begin{problem}
{\rm (a)}
Let $1\le q\le p<\infty$. Find a nontrivial lower bound for $M(H[L^{p,q}])$.
Improve the upper bound for $M(H[L^{p,q}])$ given by 
\eqref{eq:M-Hardy-Lorentz}.

{\rm(b)}
Let $1< q\le p<\infty$. Find a nontrivial lower bound for $M^*(H[L^{p,q}])$.
Improve the upper bound for $M^*(H[L^{p,q}])$ given by 
\eqref{eq:M*-Hardy-Lorentz}.
\end{problem}
%%%----------------------------------------------------------------------------
\subsection*{Acknowledgments}
This work is funded by national funds through the FCT - Funda\c{c}\~ao para a 
Ci\~encia e a Tecnologia, I.P., under the scope of the projects UIDB/00297/2020 
and UIDP/00297/2020 (Center for Mathematics and Applications).

%% The Appendices part is started with the command \appendix;
%% appendix sections are then done as normal sections
%% \appendix

%% \section{}
%% \label{}

%% If you have bibdatabase file and want bibtex to generate the
%% bibitems, please use
%%
%%\bibliographystyle{elsarticle-num} 
\bibliographystyle{abbrv}
\bibliography{OKES17}

%% else use the following coding to input the bibitems directly in the
%% TeX file.

%\begin{thebibliography}{00}

%% \bibitem{label}
%% Text of bibliographic item

%\bibitem{}

%\end{thebibliography}
\end{document}